\newcommand{\R}{\mathbb{R}}
\newcommand{\N}{\mathbb{N}}
\newcommand{\C}{\mathbb{C}}
\newcommand{\U}{\mathcal{U}}
\DeclareMathOperator*{\argmax}{arg\,max}
\newcommand {\e}  {\varepsilon}
\newcommand{\vertiii}[1]{{\vert\kern-0.25ex\vert\kern-0.25ex\vert #1 
    \vert\kern-0.25ex\vert\kern-0.25ex\vert}}
\newtheorem{prpstn}{Proposition}[section]
\newtheorem{lmm}{Lemma}[section]
\newtheorem{thrm}{Theorem}[section]
\newtheorem{dfntn}{Definition}[section]
\newtheorem{crllr}{Corollary}[section]
\newtheorem{rmrk}{Remark}[section]
\title{Random Leja points}
\author[a]{Camille Pouchol}
\affil[a]{Laboratoire MAP5, UMR 8145,
Université Paris Cité, 75006 Paris, France.\footnote{Email address: camille.pouchol@u-paris.fr.}}
\date{\empty}
\begin{document}
\maketitle

\begin{abstract}
Leja points on a compact $K \subset \C$ are known to provide efficient points for interpolation, but their actual implementation can be computationally challenging. So-called pseudo Leja points are a more tractable solution, yet they require a tailored implementation to the compact at hand.  
We introduce several more flexible random alternatives, starting from a new family we call random Leja points. To make them tractable, we propose an approximate version which relies on the Metropolis-Hastings algorithm with the uniform measure. We also analyse a different family of points inspired by recently introduced randomised admissible meshes, obtained by uniform sampling. 
When the number of iterations or drawn points is appropriately chosen, we establish that the two resulting families of points provide good points for interpolation. That is, they almost surely lead to convergent interpolating polynomials for holomorphic functions. The two last families of points are readily implemented assuming one knows how to sample uniformly at random in $K$. These makes them more modular than competing deterministic methods.
We run numerical experiments to compare the proposed methods in terms of accuracy and computational complexity, for various types of compact sets.
\end{abstract}

\section{Introduction}

\subsection{From Leja to random Leja points}
\paragraph{Leja points.} For a given compact set $K \subset \C$, a set of \textit{Leja points} for $K$ refers to any sequence $(z_n) \in K^\N$ satisfying 
\begin{equation}
\label{def_det_leja}
z_{n} \in \argmax_{z \in K} \prod_{i=0}^{n-1} |z-z_i|.
\end{equation}
Introducing the short-hand notation $\pi_n(z) = \textstyle  \prod_{i=0}^{n-1} (z-z_i)$ and the supremum norm $\|\cdot\|_K$ on $K$, this condition rewrites compactly as $|\pi_n(z_n)| = \|\pi_n\|_K$.
These points have been introduced by Leja in~\cite{Leja1957}, and are most notably used in potential theory and in polynomial interpolation; the latter field is our main motivation.

Various theoretical results and numerical experiments have shown that Leja points enjoy good properties when it comes to polynomial interpolation. Yet, the very definition~\eqref{def_det_leja} poses a challenge from the computational point of view. Indeed, finding the next point requires solving a non-convex optimisation problem with many local maxima. 

\paragraph{Pseudo Leja points.}
The seminal work~\cite{PseudoLeja2012} introduces \textit{pseudo Leja points}, where the maximisation condition~\eqref{def_det_leja} is relaxed as follows.
 \begin{dfntn}
 \label{pseudoLeja}
 A set of pseudo-Leja points for $K$ is any sequence $(z_n) \in K^\N$ such that
 \[|\pi_n(z_n)| \geq \tau_n \|\pi_n\|_K,\] where $0 < \tau_n \leq 1$ is subexponential, i.e., satisfies $\tau_n^{1/n} \to 1$ as $n \to +\infty$. 
 
If $\tau_n \sim n^{-\beta}$\footnote{We use the notation $u_n \sim v_n$ to mean that both $u_n = O(v_n)$ and $v_n = O(u_n)$ hold.}, we shall say that the points $z_n$ are pseudo-Leja points of order $\beta$ for $K$. 
\end{dfntn} 
Based on this idea, the authors of~\cite{PseudoLeja2012} propose to use~\textit{weakly admissible meshes}, introduced in~\cite{CalviLeastSquares2008}, see also the review~\cite{ReviewWAM2011}. Essentially, these are properly defined adaptive meshes (i.e. that change with~$n$) so that the maximisation condition defining Leja points is replaced by a maximisation over a finite set.

The resulting numerical methods suffer from one significant drawback: they lack modularity, since the construction of a weakly admissible mesh strongly depends on the chosen compact $K$. 

\bigskip 
\textit{\underline{Goal of the present work}.} Our purpose is to propose and analyse several random alternatives to Leja points, and prove that they constitute good points for interpolation. We place emphasis on methods that rely on sampling points uniformly at random inside of $K$.

\paragraph{Random Leja points.}  
 Let $\sigma$ be a Borel measure on $K$.
First, we introduce a new set of points which we call \textit{random Leja points}. Given $Z_0$ any given random variable, they are defined recursively by
\begin{equation}
\label{def_rand_leja}
 Z_n \sim |\pi_n(z)| = \prod_{i=0}^{n-1} |z-Z_i|,
 \end{equation}
where the above notation means that, conditionally on $(Z_0, \ldots, Z_{n-1})$,  the law of $Z_n$ is absolutely continuous with respect to $\sigma$, with density proportional to the function $|\pi_n|$. 

When it comes to actually computing such points, rejection sampling is a natural approach. It has the key property that the normalisation constant $\|\pi_n\|_1 = \textstyle \int_K |\pi_n(z)| d \sigma(z)$ need not be computed. The most natural majorising measure is the uniform measure (with respect to $\sigma$), which we denote~$\mathcal{U}_\sigma(K)$.

In order to apply rejection sampling with the uniform measure, one needs to obtain bounds of the form 
$\|\pi_n\|_K \leq M_n$. Crude estimates such as $M_n = \mathrm{diam}(K)^n$ will become exponentially bad and lead to practically endless rejection loops. On the other hand, one can obtain better (polynomial) bounds by considering the normalisation constant $\|\pi_n\|_1$ by means of so-called~\textit{Nikolskii inequalities}, but the rejection step will require evaluating the constant.

\paragraph{Random Leja points by Metropolis Hastings sampling.}

An alternative approach that can boast the polynomial bound for the normalised density, without computing the normalisation constant $\|\pi_n\|_1$ is the (independent) Metropolis Hastings algorithm, with the uniform measure $\mathcal{U}_\sigma(K)$ as a proposal distribution for ease of implementability. The price to pay is that the algorithm, when stopped at a given iteration $k$, yields a random variable that only approximately follows the law of interest. 

Let us informally describe the (independent) Metropolis Hastings algorithm to sample from a probability density $f$ with proposal distribution given by $\mathcal{U}_\sigma(K)$. Starting from some initial distribution, say $Z_0 \sim \mathcal{U}_{\sigma}(K)$, the algorithm iteratively computes
\begin{align}
\label{informal_mh}
\begin{cases}
X \sim \mathcal{U}_\sigma(K) \\
Z_{k} = 
\begin{cases} 
 X \text{ with probability } \min\bigg(\frac{f(X)}{f(Z_{k-1})}, 1\bigg) \\
 Z_{k-1} \text{ otherwise} 
\end{cases}
\end{cases}
\end{align}
Let us denote $\mathcal{M}_k(f)$ the iterates defined by~\eqref{informal_mh}.


Given $Z_0$ any given random variable, well-chosen numbers $N_n$ for all $n \geq 1$, we recursively define (under appropriate independence assumptions)
\begin{align}
\label{def_MH_Leja}
Z_n := \mathcal{M}_{N_n}\bigg( \frac{|\pi_n|}{\|\pi_n\|_1}\bigg).
\end{align}
The idea is that for a sufficiently large number of iterates $N_n$, the convergence of iterates to the desired distribution should ensure that MH points share any good property random Leja points might have.
We coin these \textit{Metropolis-Hastings random Leja points} and use the abbreviation "MH  points".

Note that the resulting method~\eqref{def_MH_Leja} hence relies on one single main assumption, which is to be able to efficiently draw points uniformly at random (with respect to $\sigma$) in $K$.  This is a very weak requirement that makes computing MH points quite flexible.

\paragraph{Pseudo Leja points by exhaustive uniform sampling.}
An equally flexible approach to the above is to compute pseudo Leja points with a randomised weakly admissible mesh. Such meshes have been introduced and analysed in~\cite{RandomisedWAM2023}.  
Given $Z_0$ any given random variable, well-chosen numbers $N_n$ for all $n \geq 1$ and under appropriate independence assumptions, they are defined recursively (if sampling the randomised mesh according to the uniform measure) by
\begin{align}
\label{def_rand_leja_tris}
Z_n   \in \argmax_{1 \leq k \leq N_n}  |\pi_n(X_k)|, \qquad  X_1, \ldots, X_{N_n} \sim \mathcal{U}_\sigma(K).
\end{align}
Since these are based on a random mesh, we will use the abbreviation "RM points" when speaking about these random points. 
\bigskip

As discussed later on in much more detail, we show that abstract random Leja points~\eqref{def_rand_leja}, as well as the more implementable inspired version thereof~\eqref{def_MH_Leja} (MH points) or the alternative one~\eqref{def_rand_leja_tris} (RM points) with $N_n$ appropriately chosen \textbf{almost surely} provide good points for interpolation, under generic assumptions.

\subsection{State of the art}
\paragraph{Interest in Leja points.} Compared to alternative families of points used in potential theory or polynomial interpolation, such as Chebychev or Fekete points, Leja points have two very appealing properties. First, they are computationally much more tractable. Second, they are hierarchical: when going from $n$ points to $n+1$ points, a single new point has to be computed rather than computing $n+1$ new points from scratch.

\paragraph{Potential theory.} In potential theory, points $(z_n)$ of interests are notably those for which one can recover
 the \textit{logarihmic capacity} $\mathrm{cap}(K)$ of $K$ and, assuming it is unique, the \textit{equilibrium measure} $\mu_K$ of $K$. We refer to~\cite{Ransford1995} for a definition of these two notions.
 For nonpolar compact sets $K$ (i.e., those having positive logarithmic capacity), $\mu_K$ is unique and it is known that Leja points \textit{asymptotically distribute according to the equilibrium measure~$\mu_K$}, namely
\begin{equation}
\label{recover_measure}
 \frac{1}{n} \sum_{j=0}^{n-1} \delta_{z_j} \xrightharpoonup{\; \,\ast\;\, } \mu_K,
\end{equation}
where the convergence above is to be understood in the weak-$\ast$ topology of Radon measures (the topological dual of the set of continuous functions $f : K \to \R$).

\paragraph{Polynomial interpolation.} Let $L_n(f)$ denote the unique polynomial in $\C_{n-1}[X]$ interpolating a given function $f : K \rightarrow \C$, $f \in \mathcal{C}(K)$ at distinct points $z_0, \ldots z_{n-1}$ in $K$, i.e., 
\[L_n(f) = \sum_{i=0}^{n-1} f(z_i)\, \ell_{i,n},  \qquad  \ell_{i,n}(X) = \prod_{j\neq i} \frac{X - z_j}{z_i -z_j}. \]

One essential requirement for points to be good for interpolation is for them to be \textit{extremal}.
\begin{dfntn} 
A sequence $(z_n) \in K^\N$ of distinct points is said to be a set of extremal points for $K$ if it satisfies 
 \[\left(\text{for any $f$ holomorphic in a neighbourhood of $K$}, \quad \|L_n(f) -f \|_K \longrightarrow 0 \right). \]
\end{dfntn}
It is a general fact that distinct points asymptotically distributing according to $\mu_K$ (i.e., satisfying~\eqref{recover_measure}) in a nonpolar compact set are extremal~\cite{Gaier1987lectures}.\footnote{In fact, they are extremal not only for $K$ but even for the polynomial convex hull of $K$, but these two sets coincide for most usual compact sets.}. As a result, Leja points are extremal.

In order to obtain and quantify convergence for less smooth functions, the proper notion is that of the associated \textit{Lebesgue constants} $\Lambda_n$\footnote{This constant is nothing but the operator norm of the continuous linear mapping $f\mapsto L_n(f)$ from $\mathcal{C}(K)$ to itself, endowed with~$\|\cdot\|_K$. Hence, it also characterises how stable the interpolation process is.}, defined by
\[\Lambda_n := \|\lambda_n\|_K, \qquad \lambda_n(z):=  \sum_{i=0}^{n-1} |\ell_{i,n}(z)|, \]
since one has $\textstyle \|L_n(f)-f\|_K \leq (1+ \Lambda_n) \inf_{P \in \C_n[X]} \|P-f\|_K$.

Perhaps surprisingly, few results are known about Lebesgue constants of Leja points. Very recently, Leja points have been proved to have subexponential Lebesgue constants whatever the nonpolar compact set~\cite{Totik2023}, extending the results of~\cite{Totik2010}. Note that points with subexponential Lebesgue constants are extremal in nonpolar compact sets.

Precise estimates are known only for specific compact sets. For instance, one has $\Lambda_n = O(n)$ for the unit circle~\cite{Lebesgue_Leja_Disk2013}, and $\Lambda_n = O(n^{13/4})$ for (finite unions of) closed intervals $K \subset \R$~\cite{Explicit_Lebesgue_Interval2022}, although numerical evidence suggests that this can be improved to $\Lambda_n = O(n)$. 




\paragraph{Pseudo Leja points.}
In order to actually compute Leja points, one crude approach is to fix a fine grid of $K$ once and for all, and compute points on that grid. The resulting points are no longer guaranteed to be extremal; when $n$ nears the grid size, they may even perform quite poorly. Adaptive meshes have been proposed but for very specific compact sets~\cite{FastLeja1998}. 

Pseudo Leja points introduced in~\cite{PseudoLeja2012} are shown to satisfy~\eqref{recover_measure} whenever $K$ is nonpolar, hence they are extremal. In fact, the result in~\cite{Totik2023} shows that Lebesgue constants are subexponential for pseudo Leja points with $\tau_n = \tau$ (that is, pseudo-Leja points of order $0$), whatever the value of $0 < \tau \leq 1$. 
The results of~\cite{Explicit_Lebesgue_Interval2022} for subsets of the real line also accommodate this case in the form $\Lambda_n = O(n^{13/4 + c(\tau)})$ with an explicitly known constant~$c(\tau)$.



\subsection{Main results}

\paragraph{Extremality.} Our main results may informally be stated as follows: for generic (nonpolar) compact sets $K$ and measures~$\sigma$, almost surely
\begin{itemize}

\item[(i)] random Leja points defined by~\eqref{def_rand_leja} are extremal, see Theorem~\ref{main_1},

\item[(ii)] MH points defined by~\eqref{def_MH_Leja} with $N_n$ sufficiently large are extremal, see Theorem~\ref{thm_mh_leja},

\item[(iii)] RM points defined by~\eqref{def_rand_leja_tris} with $N_n$ sufficiently large have subexponential Lebesgue constant, see Theorem~\ref{thm_leja_mesh}. Hence, they are extremal. 
\end{itemize}
In the case of the two last results (ii) and (iii), 'sufficiently large' refers to $N_n$ given in the form $N_n \sim n^\alpha$ with~$\alpha$ dependent on Markov, Nikolskii or covering exponents relative to the compact $K$ (and the measure $\sigma$). For typical compact sets of interests, (upper bounds for) these are available in the literature. For the method to work for any reasonable compact set, one can take any $\alpha>2$ in the case (ii) and any $\alpha>4$ in the case~(iii), but these can usually be improved for many specific compact sets of interest. These should be compared with pseudo-Leja points of~\cite{PseudoLeja2012} which require at most $\alpha = 2$ for generic compact sets. 

For simple subsets of the real line, we may even prove that Lebesgue constants associated to points~\eqref{def_rand_leja_tris} have polynomially growing Lebesgue constant, but we are not able to provide a universal exponent since the corresponding estimate is draw-dependent.

All three results (i), (ii) and (iii) rely on establishing that the points almost surely are pseudo Leja points of some order, and hence asymptotically distribute according the the equilibrium measure $\mu_K$ according to Theorem 1 of~\cite{PseudoLeja2012}. The more precise last result (iii) is based on Theorem~1 of~\cite{Totik2023}, since we show that RM points defined by~\eqref{def_rand_leja_tris} are almost surely pseudo Leja points of order~$0$.

\paragraph{Logarithmic capacity.} 
In fact, a close inspection of how it is proved that pseudo Leja points are extremal~\cite{Bloom1992, PseudoLeja2012, Totik2023} shows that a byproduct of our results are the almost sure convergences
\[ \|\pi_n\|^{1/n} \longrightarrow \,\mathrm{cap}(K), \qquad \text{and} \qquad \bigg(\prod_{0 \leq i < j \leq n-1} |Z_i -Z_j|\bigg)^{\frac{2}{n(n-1)}}\longrightarrow \;  \mathrm{cap}(K).\]
In other words, all the proposed points almost surely can be used to recover the logarithmic capacity of $K$, which is of interest for potential theory. Indeed, logarithmic capacities are notoriously hard to compute, even for simple sets~\cite{NumericalCapacity2008, NumericalCapacity2014}.

The first limit above also explains why, in general, any crude bound like $\|\pi_n\|_K \leq \mathrm{diam}(K)$ will be impractical for implementing random Leja points by rejection sampling. Indeed, we have $\mathrm{cap}(K) \geq \tfrac{1}{4}\mathrm{diam}(K) $ for all connected compact sets $K$ and the inequality is usually strict~\cite{Tsuji1959}.

\paragraph{Separation.} 
When studying these different sets of points, we also establish results pertaining to their~\textit{separation properties}, as separation bounds play an important role in many proofs of estimates for Lebesgue constants, see for instance~\cite{Explicit_Lebesgue_Interval2022, Totik2023}.
We will say that given points $(z_n)$ \textit{satisfy a separation of order $\alpha$} if 
\begin{equation}
\label{min_separation}
 \min_{0 \leq j \leq n-1} |z_n -z_j|  \geq c \, n^{-\alpha},
\end{equation} 
for some constants $c>0$, $\alpha>0$.
We prove that all proposed points almost surely satisfy a separation of (uniform) order, which again is given in terms of the aforementioned exponents.

\paragraph{Numerical results.} 

We run numerical experiments for the interval, the disk, and (potentially elaborate) polygons. For most compact sets, we explain why computing the $n$th MH or RM Leja point requires (at most) of the order of $n^2$ points, except for polygons for which RM points require about $n^4$ points, rendering them intractable. For such sets, the best approach known to us are properly defined pseudo Leja points which require $O(n)$ points (see~\cite{PseudoLeja2012}), but can be cumbersome to compute as they rest on parameterising each edge of the polygon. 

In all cases, numerical experiments suggest that MH and RM points are associated to (almost surely) polynomially growing Lebesgue constants. Whenever MH points and RM points have the same complexity (hence excluding polygons), we consistently find that RM points typically lead to better Lebesgue constants, and can be computed slightly faster. As should be expected, RM points have computation times close to those of pseudo-Leja points based on an underlying admissible mesh of comparable size.

\subsection{Open questions and perspectives}

\paragraph{Generality with respect to the compact set.}
Our results apply to generic compact sets with the natural corresponding measure, as long as Markov (for RM points) or Nikolskii (for MH points) inequalities are of avail. All connected compact sets satisfy a Markov inequality with exponent at most $2$, but there are comparably fewer results in the case of Nikolskii inequalities. For instance, we are not aware of results for sets with cusps. In such a case, MH points are still well-defined but the lack of a Nikolskii inequality prevents us to provide a theoretically-validated rule to choose the number of iterates within the Metropolis-Hastings algorithm.

\paragraph{Sampling method for random Leja points.}
We have found rejection sampling to be computationally infeasible in order to sample random Leja points. This has led us to MH points, which are not random Leja points but an approximation thereof. We do not know if alternative sampling strategies could be competitive with the method developed here.


\paragraph{Curse of dimensionality.}
Our proposed sets of points have a natural generalisation to higher dimensions. In this setting, however, it is not even known whether Leja points are extremal or not. Also, the number of uniform random points $N_n$ one should draw then would be of the order $n^{r d}$ both for MH and RM points, where $r$ depends on the chosen method.

In fact, it is known that weakly admissible meshes (on which RM points are based) necessarily require a number of points that grows exponentially with the dimension~\cite{Kroo2011}. This is why randomised approaches based on weakly admissible meshes RM are bound to suffer from the so-called \textit{curse of dimensionality}. As for MH points, we believe that our estimates based on Nikolskii inequalities are sharp; since Nikolskii exponents typically increase linearly with dimension, the same issue arises. 


\paragraph{Lebesgue constants.}
As already mentioned, the asymptotic behaviour of Lebesgue constants of Leja points (or variants) is poorly understood.  Our attempts at analysing Lebegue constants for the proposed random sets of points were unsuccessful. Finding general estimates for Lebesgue constants of all the points introduced in the present paper is thus a completely open problem.


\subsection*{Outline of the paper}
In Section~\ref{sec_random}, we give the main hypotheses used throughout about the compact $K$ and the measure~$\sigma$. We then proceed to analysing the abstract random Leja points~\eqref{def_rand_leja}.
Then in Section~\ref{sec_MH_Leja}, we analyse the set of MH points~\eqref{def_MH_Leja}, building upon geometric convergence results for independent Metropolis-Hastings samplers in the Wasserstein metric. Then in Section~\ref{sec_random_uniform}, we discuss the case of RM points~\eqref{def_rand_leja_tris}, following ideas of~\cite{RandomisedWAM2023}.
 Finally in Section~\ref{sec_num}, we confirm our theoretical results by numerical experiments for various compact sets and discuss the advantages and caveats associated to random Leja points and variants. 

\section{Random Leja points}
\label{sec_random}
\subsection{Notations and main hypotheses}
\label{subsec_hyp}
Let $K$ be a compact subset $\C$ and $\sigma$ be a finite Borel measure on $K$. We denote \[\|f\|_K := \sup_{z \in K} |f(z)|\] for $f \in \mathcal{C}(K)$, the set of continuous functions $f: K \to \C$.

For $1 \leq p \leq +\infty$, we will be led to consider the $L^p$ spaces associated to the measure~$\sigma$ on $K$. We let $\|\cdot\|_p$ be the corresponding norms.

 From now on, a random variable $Z$ will refer to any measurable mapping $Z : \Omega \to K$ with $(\Omega, \mathcal{A}, \mathbb{P})$ a probability space, and $K$ endowed with the Borel $\sigma$-algebra and measure $\sigma$. 


Generically, if $(Z_n)_{n \in \N}$ is a sequence of random variables, $\mathcal{F}_n$ will stand for the $\sigma$-algebra generated by $Z_0, \ldots, Z_{n-1}$, and $\pi_n$ will stand for the (random) polynomial $\textstyle z \mapsto \prod_{j=0}^{n-1} (z-Z_j)$.

\paragraph{Hypotheses on $K$ and $\sigma$.} We will be using the following hypotheses that $K$ and/or $\sigma$ should satisfy. For all practical purposes, all the hypotheses below are satisfied for most of generic compact sets with their associated 'natural' measures. 

First, we consider an hypothesis coming from potential theory, namely
 \begin{equation} \text{$K$ is nonpolar.} \tag{H1} \label{nonpolar}\end{equation}
All 'reasonable' compact sets are nonpolar: smooth Jordan curves, convex sets and sets with positive area measure are nonpolar.
Under~\eqref{nonpolar}, $K$ admits a unique equilibrium measure $\mu_K$~\cite{Ransford1995}.

 We also need the measure to give weight to the whole of $K$ in the following sense:
  \begin{equation} \text{for all continuous functions  $f :  K \rightarrow \R, \qquad \|f\|_K = \|f\|_\infty$.} \tag{H2} \label{linf}\end{equation}
 For instance, this holds true if $\sigma(O)>0$ for any open set $O$ (in the topology of $K$).
 Assumption~\eqref{linf} is critical in comparing the quality of the newly generated random point, which requires comparing $|\pi_n(Z_n)|$ to $\|\pi_n\|_K$, while the generation process involves $\sigma$ and corresponding estimates naturally involve the $L^\infty$ norm associated to $\sigma$.

 Finally, we will need two types of inequalities, namely a so-called \textit{Markov inequality}
  \begin{equation}  
       \label{nikolskii}
        \forall P \in \C_n[X], \quad \|P\|_\infty \leq c_{\ell}  \, n^{r_{\ell}} \|P\|_1 \tag{H3},\end{equation}
 with $c_\ell>0$, $r_\ell>0$ as well as a so-called \textit{Nikolskii inequality}
  \begin{equation} \label{markov}
   \forall P \in \C_n[X], \quad \|P'\|_K \leq c_m  \, n^{r_m}  \|P\|_K \tag{H4} ,\end{equation}
   with $c_m>0$, $r_m>0$.  
  We call $r_{\ell}$ the \textit{Minkowski exponent} of $(K, \sigma)$, and $r_m$ the \textit{Markov exponent} of $K$, with a slight abuse since these usually refer to the best such possible constants. 
  

For most compact sets $K$ (and the natural corresponding measures $\sigma$), the Nikolskii inequality~\eqref{nikolskii} is known to hold and the (optimal) constant $r_\ell$ is known~\cite{Pritsker1997}.
For $K$ a Jordan domain with smooth boundary and $\sigma$ the area measure, one has $r_\ell = 2$.\footnote{It suffices that the boundary be a quasidisk; this covers smooth boundaries up to Lipschitz regularity, but excludes cusps.} For $K$ a rectifiable Jordan curve with $\sigma$ the arclength, one has $r_\ell = 1$. Line segments have Nikolskii exponent $r_\ell = 2$~\cite{BookPolynomials1994}.

For most compact sets $K$, the Markov inequality~\eqref{markov} is known to hold and the (optimal) constant $r_m$ is known~\cite{ReviewPolynomials1994, BernsteinMarkovReview2021}. For instance, that any compact connected set $K$ satisfies~\eqref{markov} is proved in~\cite{MarkovPommerenke1959} with $r_m = 2$, while for $C^2$-smooth Jordan curves $K$, one has $r_m = 1$. Hence, $r_m = 1$ also for a set $K$ enclosed by a smooth Jordan curve, by the maximum modulus principle.
 
\begin{rmrk}
One readily obtains corresponding results for (finite) unions of sets satisfying the above hypotheses.
\end{rmrk}

We shall consider one additional basic hypothesis to ensure that some of the random points $(Z_n)$ we generate are almost surely distinct:
 \begin{equation} \text{$\sigma$ has no atom.} \tag{H5} \label{atom} \end{equation}
 This assumption is essentially technical and could be relaxed, but at the price of more convoluted statements.

  \subsection{Preparatory results} 
  We gather important results that will play an important role going forward. 
   First, we will find it convenient to use the following basic estimate, which is also used in~\cite{PseudoLeja2012}[Theorem 3].   For completeness, we provide the proof. 

 \begin{lmm}
 \label{taylor}
Assume that~\eqref{markov} holds.
Then for all $P \in \mathbb{C}_n[X]$, one has 
\[\forall u, v \in K, \quad |P(v) -P(u)| \leq \|P\|_K \big( e^{c_m n^{r_m} |v-u|} - 1\big).\]
 \end{lmm}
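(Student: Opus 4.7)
The plan is to bound the derivatives of $P$ iteratively via Markov's inequality, and then use the Taylor expansion of $P$ around $u$ (valid on all of $\C$ since $P$ is a polynomial, hence entire) to estimate $|P(v)-P(u)|$ as the tail of an exponential series.

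First I would prove by induction on $k \geq 0$ that
\[
\|P^{(k)}\|_K \leq (c_m n^{r_m})^k \|P\|_K.
\]
The case $k=0$ is trivial. For the inductive step, note that $P^{(k)}$ is a polynomial of degree at most $n-k \leq n$, so assumption~\eqref{markov} yields $\|P^{(k+1)}\|_K = \|(P^{(k)})'\|_K \leq c_m n^{r_m} \|P^{(k)}\|_K$, and the induction hypothesis closes the argument. Here I am using the crude degree bound $n$ rather than $n-k$; this is where the clean exponential form of the stated estimate comes from.

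Next, for $u,v \in K$, since $P$ is a polynomial of degree $n$, its Taylor expansion at $u$ is a finite sum valid for every $v \in \C$:
\[
P(v) - P(u) = \sum_{k=1}^{n} \frac{P^{(k)}(u)}{k!} (v-u)^k.
\]
Taking absolute values, using $|P^{(k)}(u)| \leq \|P^{(k)}\|_K$ (since $u \in K$), and the derivative bound from the first step, I obtain
\[
|P(v)-P(u)| \leq \|P\|_K \sum_{k=1}^{n} \frac{(c_m n^{r_m}|v-u|)^k}{k!} \leq \|P\|_K \sum_{k=1}^{\infty} \frac{(c_m n^{r_m}|v-u|)^k}{k!} = \|P\|_K \bigl(e^{c_m n^{r_m}|v-u|} - 1\bigr).
\]

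There is no real obstacle here: the only care needed is the degree accounting in the inductive step (using $n$ rather than $n-k$ as the generous upper bound on the degree of $P^{(k)}$, which is what produces a uniform factor $c_m n^{r_m}$ per differentiation and thus the clean exponential in the final bound), and the observation that $K$ need not be convex because $P$ is entire and Taylor's formula holds globally in $\C$.
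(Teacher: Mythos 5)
Your proof is correct and follows essentially the same route as the paper's: a repeated application of the Markov inequality to obtain $\|P^{(k)}\|_K \leq (c_m n^{r_m})^k \|P\|_K$, followed by the exact Taylor expansion of the polynomial at $u$ and comparison of the resulting finite sum with the exponential series. Your version is in fact slightly more careful, since you make the degree bookkeeping explicit and correctly write the last comparison as an inequality (the paper's displayed equality between the truncated sum and $e^{c_m n^{r_m}|v-u|}-1$ should be a $\leq$).
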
 
 \begin{proof}
A repeated application of Markov's inequality~\eqref{markov} entails $\|P^{(k)}\|_K  \leq c_m^k n^{r_m k} \|P\|_K$.
Using Taylor's formula for polynomials, we deduce that
 \begin{align*} 
|P(v) -P(u)| & \leq \sum_{k=1}^n |P^{(k)}(u)| \frac{|v-u|^{k}}{k!} \leq \sum_{k=1}^n \|P^{(k)}\|_K \frac{|v-u|^{k}}{k!}  \\
& \leq \|P\|_K \sum_{k=1}^n  \frac{\left(c_m n^{r_m} |v-u|\right)^{k}}{k!} 
 =  \|P\|_K\big( e^{c_m n^{r_m} |v-u|} - 1\big)
 \end{align*}
  \end{proof}
  
  Now we recall some of the main results of~\cite{PseudoLeja2012}.
  \begin{thrm}[\cite{PseudoLeja2012}]
  \label{FromCalvi}
  Assume that~\eqref{nonpolar} holds.
If points $z_n$ are pseudo-Leja points for $K$, then
\begin{itemize}
\item  the points $z_n$ asymptotically distribute according to $\mu_K$ and hence are extremal,
\item if furthermore~\eqref{markov} holds, and the points $z_n$ are pseudo-Leja points of order $\beta$, then they satisfy a separation of order $\beta + r_m$.
\end{itemize}
  \end{thrm}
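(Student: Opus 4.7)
The plan is to handle the two bullets separately, starting with the separation claim because it follows from a short calculation built on Lemma~\ref{taylor}, whereas the asymptotic-distribution statement is fundamentally a Fekete-type potential-theoretic argument.

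\textbf{Separation.} Fix $n \geq 1$ and any $j \in \{0,\ldots,n-1\}$. Since $\pi_n(z_j) = 0$, Lemma~\ref{taylor} applied with $u = z_j$ and $v = z_n$ gives
\[
|\pi_n(z_n)| = |\pi_n(z_n) - \pi_n(z_j)| \leq \|\pi_n\|_K \bigl(e^{c_m n^{r_m} |z_n - z_j|} - 1\bigr).
\]
Combined with the pseudo-Leja lower bound $|\pi_n(z_n)| \geq \tau_n \|\pi_n\|_K$ and the elementary inequality $\log(1+x) \geq x/2$ for small $x>0$, this rearranges to $|z_n - z_j| \geq \log(1+\tau_n)/(c_m n^{r_m}) \gtrsim \tau_n / n^{r_m}$. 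Taking the minimum over $j$ and plugging in $\tau_n \sim n^{-\beta}$ yields separation of order $\beta + r_m$.

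\textbf{Asymptotic distribution.} For this bullet I would follow the standard route through Fekete-type energies. Let $t_n(K)$ denote the $n$-th Chebyshev constant of $K$, so that $t_n(K)^{1/n} \downarrow \mathrm{cap}(K)$; in particular $\|\pi_n\|_K \geq t_n(K) \geq \mathrm{cap}(K)^n$ since $\pi_n$ is monic. Let also $\delta_n(K)$ denote the $n$-th diameter, with $\delta_n(K) \downarrow \mathrm{cap}(K)$ by Fekete--Szeg\H{o}. The central observation is the telescoping identity
\[
\prod_{0 \leq i < j \leq n-1} |z_i - z_j| = \prod_{k=1}^{n-1} |\pi_k(z_k)|,
\]
which combined with the pseudo-Leja condition and the Chebyshev lower bound produces
\[
\Bigl(\prod_{i<j} |z_i - z_j|\Bigr)^{\frac{2}{n(n-1)}} \geq \Bigl(\prod_{k=1}^{n-1} \tau_k\Bigr)^{\frac{2}{n(n-1)}} \mathrm{cap}(K).
\]
Subexponentiality $\tau_k^{1/k} \to 1$, i.e.\ $\log \tau_k = o(k)$, forces $(\prod_k \tau_k)^{2/(n(n-1))} \to 1$ by a Ces\`aro-type estimate on the partial sums. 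Combined with the easy upper bound $\bigl(\prod_{i<j}|z_i-z_j|\bigr)^{2/(n(n-1))} \leq \delta_n(K) \to \mathrm{cap}(K)$, this establishes that the $(z_n)$ form an asymptotic Fekete array. By the classical argument based on lower-semicontinuity of the logarithmic energy and the characterisation of $\mu_K$ as its unique minimiser on probability measures supported in $K$, we deduce $\tfrac{1}{n}\sum_{j=0}^{n-1} \delta_{z_j} \xrightharpoonup{\; \,\ast\;\, } \mu_K$. Extremality then follows from the Gaier result already cited in the paper.

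The main subtlety is that the pseudo-Leja condition $|\pi_n(z_n)| \geq \tau_n \|\pi_n\|_K$ only compares the new point to the global supremum of $|\pi_n|$ on $K$ and by itself carries no information about how large $\|\pi_n\|_K$ actually is; coupling it with the universal Chebyshev lower bound $\|\pi_n\|_K \geq \mathrm{cap}(K)^n$ is what unlocks the lower bound on the Fekete energy. The secondary point to verify is that subexponentiality propagates through the $2/(n(n-1))$ normalisation, which is precisely the strength of subexponentiality demanded in Definition~\ref{pseudoLeja}. By contrast, the separation bullet is essentially a one-line consequence of Lemma~\ref{taylor}.
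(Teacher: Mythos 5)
The paper does not actually prove Theorem~\ref{FromCalvi}: it is imported verbatim from \cite{PseudoLeja2012} (Theorems 1 and 3 there), so there is no in-paper proof to compare against. Your argument is correct and is essentially the argument of that reference: the separation bullet is exactly the Taylor--Markov computation of Lemma~\ref{taylor} applied at $u=z_j$, $v=z_n$ (the paper itself remarks that this lemma is the one used in \cite{PseudoLeja2012}[Theorem 3]), and the distribution bullet is the standard asymptotically-Fekete route --- the telescoping identity $\prod_{i<j}|z_i-z_j|=\prod_{k}|\pi_k(z_k)|$, the Chebyshev lower bound $\|\pi_k\|_K\ge \mathrm{cap}(K)^k$, the Ces\`aro argument showing subexponentiality of $\tau_k$ survives the $2/(n(n-1))$ normalisation, and the energy-minimisation characterisation of $\mu_K$ --- which is precisely the mechanism the paper alludes to in its ``Logarithmic capacity'' paragraph when citing \cite{Bloom1992, PseudoLeja2012}. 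Only cosmetic quibbles: one should note $\|\pi_n\|_K>0$ (automatic since a nonpolar $K$ is infinite) before dividing in the separation step, and $t_n(K)^{1/n}$ need not be monotone decreasing, though the only fact you use, $t_n(K)\ge \mathrm{cap}(K)^n$, is correct.
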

  
 We now provide a lemma that underlies our main results.
\begin{lmm}
 \label{omnipotent_lemma}
 Assume that~\eqref{nonpolar} holds.
Let $(Z_n)$ be a sequence of random variables such that the points $(Z_n)$ are almost surely distinct, and, for some $\beta \geq 0$, \begin{equation}
\label{conv_series}
\sum \mathbb{P} \; (|\pi_n(Z_n)|  < n^{-\beta} \|\pi_n\|_K) \quad \text{converges}.
\end{equation}

Then, almost surely,
\begin{itemize}
\item the points $Z_n$ asymptotically distribute according to $\mu_K$ and hence are extremal,
\item if~\eqref{markov} holds, the points $Z_n$ satisfy a separation of order $\beta + r_m$.
\end{itemize}
 \end{lmm}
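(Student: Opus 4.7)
The plan is to use the Borel--Cantelli lemma to almost surely reduce to the (pathwise) pseudo-Leja setting of Theorem~\ref{FromCalvi}, then read both conclusions off that theorem.

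\textbf{Step 1 (Borel--Cantelli).} Convergence of $\sum_n \mathbb{P}(|\pi_n(Z_n)| < n^{-\beta}\|\pi_n\|_K)$ together with the first Borel--Cantelli lemma gives that the event
\[
E := \{ \omega : |\pi_n(Z_n(\omega))| \geq n^{-\beta}\|\pi_n(\omega,\cdot)\|_K \text{ for all $n$ large enough} \}
\]
has probability one. Intersect $E$ with the full-measure event on which the $Z_n$'s are pairwise distinct, and call the resulting full-measure event $\Omega_0$.

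\textbf{Step 2 (pathwise pseudo-Leja property).} Fix $\omega \in \Omega_0$. Since the $Z_n(\omega)$ are distinct, $\pi_n(\omega,Z_n(\omega)) \neq 0$, so $\tau_n(\omega) := |\pi_n(Z_n(\omega))|/\|\pi_n(\omega,\cdot)\|_K$ is well defined and lies in $(0,1]$ (the upper bound holding because $Z_n \in K$). By Step 1, $\tau_n(\omega) \geq n^{-\beta}$ for $n \geq N(\omega)$. Together with $\tau_n \leq 1$, this implies $\tau_n(\omega)^{1/n} \to 1$, so the realised sequence $(Z_n(\omega))$ is a pseudo-Leja sequence in the sense of Definition~\ref{pseudoLeja}. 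Applying the first bullet of Theorem~\ref{FromCalvi} immediately yields, almost surely, that the $Z_n$'s asymptotically distribute according to $\mu_K$ and are therefore extremal.

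\textbf{Step 3 (separation).} For the separation statement I would give the short direct argument from Lemma~\ref{taylor} rather than invoke the second bullet of Theorem~\ref{FromCalvi} verbatim, since the realised $\tau_n(\omega)$ need only be bounded below by $n^{-\beta}$ (not necessarily $\sim n^{-\beta}$). Fix $\omega \in \Omega_0$ and $n \geq N(\omega)$. Apply Lemma~\ref{taylor} to $\pi_n \in \C_n[X]$ at $u = Z_j$ (with $j < n$) and $v = Z_n$; since $\pi_n(Z_j)=0$,
\[
n^{-\beta}\|\pi_n\|_K \leq |\pi_n(Z_n)| = |\pi_n(Z_n)-\pi_n(Z_j)| \leq \|\pi_n\|_K\bigl(e^{c_m n^{r_m}|Z_n-Z_j|} - 1\bigr).
\]
Rearranging and using $\log(1+x) \geq x/2$ for small $x \geq 0$ gives $|Z_n - Z_j| \geq \frac{1}{2c_m} n^{-(\beta + r_m)}$ for $n$ large, uniformly in $j < n$. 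The finitely many remaining indices $n < N(\omega)$ contribute only a (pathwise) strictly positive minimum separation, so the inequality $\min_{j<n}|Z_n-Z_j| \geq c(\omega) n^{-(\beta+r_m)}$ holds for all $n$ after suitably shrinking the constant.

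\textbf{Main obstacle.} The only delicate point is the mismatch between the pseudo-Leja notion (which asks $\tau_n \sim n^{-\beta}$) and what Borel--Cantelli provides (only the lower bound $\tau_n \geq n^{-\beta}$ eventually). This is what forces Step 3 to be handled by a direct application of Lemma~\ref{taylor} instead of by a black-box call to Theorem~\ref{FromCalvi}; for the distribution/extremality conclusion the lower bound is enough to recover the subexponential decay of $\tau_n$ and the black-box call is fine.
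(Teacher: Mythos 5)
Your proof is correct, and Steps 1--2 coincide with the paper's argument: Borel--Cantelli plus restriction to the distinctness event, then the observation that the realised sequence is pseudo-Leja. The divergence is in Step 3, and the ``main obstacle'' you identify is not actually an obstacle in the paper's reading of Definition~\ref{pseudoLeja}: that definition only requires $|\pi_n(z_n)| \geq \tau_n \|\pi_n\|_K$ for \emph{some} admissible sequence $\tau_n$, so one is free to take $\tau_n = n^{-\beta}$ exactly for $n \geq n_0(\omega)$ and a small positive constant for the finitely many earlier indices (positivity being guaranteed by distinctness). This witnessing sequence satisfies $\tau_n \sim n^{-\beta}$, so the points are pseudo-Leja of order $\beta$ and the second bullet of Theorem~\ref{FromCalvi} applies as a black box --- which is precisely what the paper does. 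Your choice of $\tau_n$ as the realised ratio $|\pi_n(Z_n)|/\|\pi_n\|_K$ is what creates the apparent mismatch. That said, your Step 3 is a correct and self-contained derivation of the separation bound from Lemma~\ref{taylor} (it essentially reproves the relevant part of Theorem~\ref{FromCalvi}), so the end result is the same; the paper's route is shorter, yours makes the separation mechanism explicit.
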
 
\begin{proof}
By Borel-Cantelli's Lemma, the convergence of the series shows that, for almost every $\omega \in \Omega$ the points $(Z_n(\omega))$ satisfy $|\pi_n(Z_n(\omega))|  \geq n^{-\beta} \|\pi_n\|_K$ for $n$ large enough, say $n \geq n_0(\omega)$, where we slighty abuse notation since $\|\pi_n\|_K$ is also random. Upon restricting to draws $\omega \in \Omega$ such that the points $(Z_n(\omega))$ are distinct, this proves that for almost every $\omega \in \Omega$, the points $(Z_n(\omega))$ are pseudo Leja points of order $\beta$: indeed, one chooses $\tau_n(\omega) = n^{-\beta}$ for $n\geq n_0(\omega)$ and $\tau_n(\omega) = \tau$ with $0 < \tau < 1$ sufficiently small for $n< n_0(\omega)$. That one may choose a positive such $\tau$ is due to the fact that the points are distinct.  

Now the claim of the Lemma follows at once by applying Theorem~\ref{FromCalvi}.
%
%
\end{proof}

 \subsection{Extremality of random Leja points}
 
 Recall that random Leja points are defined as follows.
 \begin{dfntn}
 \label{def_rand_leja_formal}
 We call random Leja points any sequence $(Z_n)$ of random variables such that for all $n \geq 1$, conditionally on $(Z_0, \ldots, Z_{n-1})$, the law of $Z_n$ is absolutely continuous of density (proportional to) $\textstyle z \mapsto |\pi_n(z)| = \prod_{j=0}^{n-1} |z-Z_j|$, with respect to the measure $\sigma$.
  \end{dfntn}
  
Note that, by construction, random Leja points are almost surely distinct. 

\begin{lmm}
\label{lemme_truc}
Assume that~\eqref{nikolskii} holds and let $Z_0, \ldots, Z_{n-1}$ be any random variables. Let $Z_n$ be absolutely continuous of density (proportional to) $\textstyle z \mapsto |\pi_n(z)| = \prod_{j=0}^{n-1} |z-Z_j|$, with respect to the measure $\sigma$.
Then for all $0 < \tau \leq 1$,
\[\mathbb{P}\left(|\pi_n(Z_n)| < \tau  \|\pi_n\|_\infty \, \Big|  \, \mathcal{F}_n \right) \leq  \tau \sigma(K) c_\ell n^{r_\ell}.\]
\end{lmm}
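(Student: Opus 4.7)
The statement is a one-shot computation: we have the explicit conditional density of $Z_n$ with respect to $\sigma$ and we want to control the probability that $|\pi_n(Z_n)|$ is much smaller than the $L^\infty(\sigma)$ norm $\|\pi_n\|_\infty$. The only nontrivial ingredient is the Nikolskii inequality~\eqref{nikolskii}, applied to the (random but $\mathcal{F}_n$-measurable) polynomial $\pi_n \in \C_n[X]$.

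The plan is as follows. Conditioning on $\mathcal{F}_n$ freezes $\pi_n$, so the law of $Z_n$ has density $z \mapsto |\pi_n(z)|/\|\pi_n\|_1$ with respect to $\sigma$. Writing the event $A_\tau := \{z \in K : |\pi_n(z)| < \tau \|\pi_n\|_\infty\}$, I would first rewrite
\[
\mathbb{P}\bigl(|\pi_n(Z_n)| < \tau \|\pi_n\|_\infty \,\big|\, \mathcal{F}_n\bigr) = \int_{A_\tau} \frac{|\pi_n(z)|}{\|\pi_n\|_1} \, d\sigma(z).
\]
On $A_\tau$ the integrand is by definition bounded pointwise by $\tau \|\pi_n\|_\infty / \|\pi_n\|_1$, and $\sigma(A_\tau) \leq \sigma(K)$, hence
\[
\mathbb{P}\bigl(|\pi_n(Z_n)| < \tau \|\pi_n\|_\infty \,\big|\, \mathcal{F}_n\bigr) \leq \tau \, \sigma(K) \, \frac{\|\pi_n\|_\infty}{\|\pi_n\|_1}.
\]

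The last step is simply to apply the Nikolskii inequality~\eqref{nikolskii} to $\pi_n$, which has degree $n$, yielding $\|\pi_n\|_\infty \leq c_\ell n^{r_\ell} \|\pi_n\|_1$. Substituting gives the claimed bound $\tau \sigma(K) c_\ell n^{r_\ell}$. There is no real obstacle: the only point worth double-checking is that the norms involved in~\eqref{nikolskii} are those induced by the reference measure $\sigma$ (the same one used to define the density of $Z_n$), so that $\|\pi_n\|_\infty / \|\pi_n\|_1$ is exactly the ratio controlled by the Nikolskii exponent. Note in particular that assumption~\eqref{linf} is not needed here, since the statement is expressed with $\|\pi_n\|_\infty$ rather than $\|\pi_n\|_K$.
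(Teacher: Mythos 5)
Your proof is correct and follows essentially the same route as the paper: both arguments reduce the conditional probability to the bound $\tau\,\sigma(K)\,\|\pi_n\|_\infty/\|\pi_n\|_1$ and then conclude with the Nikolskii inequality~\eqref{nikolskii}. The only (immaterial) difference is that the paper obtains this intermediate bound via Markov's probability inequality applied to $|\pi_n(Z_n)|^{-1}$, whereas you bound the density pointwise on the bad set $A_\tau$ and use $\sigma(A_\tau)\leq\sigma(K)$; your closing remark that~\eqref{linf} is not needed at this stage is also consistent with the paper, which invokes it only later to replace $\|\pi_n\|_\infty$ by $\|\pi_n\|_K$.
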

\begin{proof}
We start by writing
\begin{align*} \mathbb{P}\left(|\pi_n(Z_n)| < \tau  \|\pi_n\|_\infty \, \Big|  \, \mathcal{F}_n\right) & = \mathbb{P}\left(|\pi_n(Z_n)|^{-1}> \tau^{-1}  \|\pi_n\|_\infty^{-1} \, \Big|  \,  \mathcal{F}_n\right), \\
&  \leq \tau \|\pi_{n}\|_\infty \; \mathbb{E}\left(|\pi_n(Z_n)|^{-1}  \, \Big|  \, \mathcal{F}_n\right),
\end{align*}
by Markov's (probability) inequality.
Hence, by the very definition of random Leja points, we obtain
\begin{align*} \mathbb{P}\left(|\pi_n(Z_n)| < \tau  \|\pi_n\|_\infty \, \Big|  \, \mathcal{F}_n\right) &
= \tau  \|\pi_{n}\|_\infty \frac{1}{\|\pi_n\|_{1}} \int_{K}|\pi_n(z)|^{-1}  |\pi_n(z)| \, d \sigma(z)  \\
& = \tau \sigma(K) \frac{\|\pi_{n}\|_\infty}{\|\pi_n\|_{1}} 
\end{align*}
Using \eqref{nikolskii}, we find
\begin{align*} \mathbb{P}\left(|\pi_n(Z_n)| < \tau  \|\pi_n\|_\infty \, \Big|\,   \mathcal{F}_n \right)  \leq \tau  \sigma(K) c_\ell n^{r_\ell}.
\end{align*}
\end{proof}

The previous lemma together with Lemma~\ref{omnipotent_lemma} allows us to prove that random Leja points are almost surely pseudo Leja points of order $1+r_\ell + \e$ for any $\e>0$.
\begin{thrm}
\label{main_1}
Assume that~\eqref{nonpolar}, \eqref{linf} and \eqref{nikolskii} hold. 

Then, almost surely, random Leja points asymptotically distribute according to $\mu_K$.
In particular, random Leja points are almost surely extremal.
\begin{proof}
As already mentioned,  the points $(Z_n)$ are almost surely distinct.
Since we are assuming~\eqref{nonpolar}, the expected result will be proved if we establish an estimate of the form~\eqref{conv_series}.
We let $\beta :=  1+ r_\ell+ \e$ for some fixed $\e>0$.

Using $\|\pi_n\|_K = \|\pi_n\|_\infty$ thanks to~\eqref{linf} and taking the expectation in the estimate of Lemma~\ref{lemme_truc} (valid since~\eqref{nikolskii} is assumed to hold), we have for all $n \geq 1$
\[\mathbb{P}(|\pi_n(Z_n)| < n^{-\beta} \|\pi_n\|_K)  = \mathbb{P}(|\pi_n(Z_n)| < n^{-\beta} \|\pi_n\|_\infty )\leq C n^{-\beta} n^{r_\ell} = C n^{-(1+\e)},\] for some constant $C>0$. This shows the expected convergence.
\end{proof}
\end{thrm}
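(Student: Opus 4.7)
The plan is to reduce the claim to Lemma~\ref{omnipotent_lemma} by verifying the summability hypothesis~\eqref{conv_series} for an appropriate exponent $\beta$. Since that lemma already packages the passage from a Borel--Cantelli estimate to asymptotic distribution along $\mu_K$, and extremality then follows from the general fact recalled around~\eqref{recover_measure} for nonpolar compact sets, the work reduces to two ingredients: distinctness of the points and a summable tail bound.

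For distinctness, I would argue directly from the definition. Conditionally on $\mathcal{F}_n$, the density of $Z_n$ with respect to $\sigma$ is proportional to $|\pi_n(z)| = \prod_{j=0}^{n-1}|z-Z_j|$, which vanishes at each previous point $Z_0,\ldots,Z_{n-1}$. Hence $\mathbb{P}(Z_n = Z_i \mid \mathcal{F}_n) = 0$ for every $i < n$, and a union bound over the countably many pairs shows that almost surely the $Z_n$ are pairwise distinct. Notably, \eqref{atom} is \textit{not} needed for this step because the generating density has built-in zeros at the previously drawn points.

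For the tail bound, I would apply Lemma~\ref{lemme_truc} (which builds in~\eqref{nikolskii}) with $\tau = n^{-\beta}$, take expectations of the conditional estimate, and use~\eqref{linf} to identify $\|\pi_n\|_\infty$ with $\|\pi_n\|_K$. This gives
$$\mathbb{P}\bigl(|\pi_n(Z_n)| < n^{-\beta}\|\pi_n\|_K\bigr) \leq \sigma(K)\, c_\ell\, n^{r_\ell - \beta}.$$
Choosing $\beta = 1 + r_\ell + \e$ for any fixed $\e > 0$ makes the right-hand side summable. Lemma~\ref{omnipotent_lemma}, applicable thanks to~\eqref{nonpolar} and the distinctness just established, then yields that almost surely $(Z_n)$ distributes according to $\mu_K$, whence extremality.

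There is no genuine obstacle here, since the preparatory lemmas were designed precisely so that this argument goes through mechanically; the only mild subtlety is avoiding~\eqref{atom} in establishing distinctness, which the vanishing of the density handles automatically. One point worth flagging for later theorems is that any approximation of the ideal random Leja law (as will occur for MH points) should be expected to consume part of the slack in the exponent $\beta$, translating into additional demands on the number of iterations needed to preserve summability.
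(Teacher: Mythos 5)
Your proposal is correct and follows essentially the same route as the paper: establish almost-sure distinctness from the fact that the generating density vanishes at previously drawn points, then apply Lemma~\ref{lemme_truc} with $\tau = n^{-\beta}$, $\beta = 1+r_\ell+\e$, use~\eqref{linf} to identify $\|\pi_n\|_\infty$ with $\|\pi_n\|_K$, and conclude via the summability hypothesis~\eqref{conv_series} of Lemma~\ref{omnipotent_lemma}. The only difference is that you spell out the distinctness argument that the paper dispatches with a one-line remark after Definition~\ref{def_rand_leja_formal}; your observation that~\eqref{atom} is not needed here matches the paper, which indeed does not assume it for this theorem.
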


The previous proof  combined with Lemma~\ref{omnipotent_lemma} leads to the following result as to how well random Leja points are separated. 
\begin{crllr}
Assume that~\eqref{nonpolar}, \eqref{linf}, \eqref{nikolskii} and \eqref{markov} hold. 
Then, for any $\e>0$, random Leja points almost surely satisfy a separation of order $1 + r_m + r_\ell + \e$.
\end{crllr}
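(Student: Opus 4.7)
The plan is to observe that this corollary follows essentially for free by combining the two ingredients already assembled in the preceding proof: the tail estimate on $|\pi_n(Z_n)|$ derived in the proof of Theorem~\ref{main_1}, and the separation conclusion of Lemma~\ref{omnipotent_lemma}. The key point is that Lemma~\ref{omnipotent_lemma} was deliberately packaged to output two conclusions from the same hypothesis~\eqref{conv_series}, so we only need to exhibit a suitable $\beta$.

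First I would fix $\e>0$ and set $\beta := 1 + r_\ell + \e$. Then I would simply recall that the proof of Theorem~\ref{main_1} already establishes the bound
\[
\mathbb{P}\bigl(|\pi_n(Z_n)| < n^{-\beta}\|\pi_n\|_K\bigr) \leq C\, n^{-(1+\e)},
\]
using Lemma~\ref{lemme_truc} (which requires~\eqref{nikolskii}) together with~\eqref{linf} to identify $\|\pi_n\|_K$ with $\|\pi_n\|_\infty$. Summing over $n$ gives exactly the convergence condition~\eqref{conv_series} with this value of $\beta$.

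Next I would invoke Lemma~\ref{omnipotent_lemma}. Its hypotheses are: $K$ nonpolar (which is~\eqref{nonpolar}), the points almost surely distinct (true by construction of random Leja points, as noted just after Definition~\ref{def_rand_leja_formal}), the series condition~\eqref{conv_series} (just verified), and, for the separation part of the conclusion, the Markov inequality~\eqref{markov}. All four are available under the present assumptions, so Lemma~\ref{omnipotent_lemma} yields almost sure separation of order $\beta + r_m = 1 + r_\ell + \e + r_m$, which is the desired statement. Since $\e>0$ was arbitrary, this concludes the proof.

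There is no genuine obstacle here; the work has already been done in Theorem~\ref{main_1} and Lemma~\ref{omnipotent_lemma}. The only thing to be mildly careful about is that the exponent produced by Lemma~\ref{omnipotent_lemma} adds $r_m$ to whatever $\beta$ one feeds in, so one must not forget the $+1$ coming from the summability requirement (which is the reason the theorem's exponent is $1+r_\ell+\e$ rather than $r_\ell+\e$); this $+1$ then propagates into the separation order, explaining the final $1 + r_m + r_\ell + \e$.
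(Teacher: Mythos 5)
Your proof is correct and follows exactly the route the paper intends: the convergent-series estimate with $\beta = 1 + r_\ell + \e$ from the proof of Theorem~\ref{main_1}, fed into the separation clause of Lemma~\ref{omnipotent_lemma} under~\eqref{markov}, giving order $\beta + r_m = 1 + r_m + r_\ell + \e$. The paper gives no further detail beyond this combination, so nothing is missing.
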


\section{MH  points}
\label{sec_MH_Leja}
 \subsection{Background}

\paragraph{Wasserstein distance.} For $\mu$ and $\nu$ two Borel measures on $K$, we define the $1$-Wasserstein distance between $\mu$ and $\nu$ by
\begin{align*}
W(\mu, \nu) &= \inf_{\xi \in \Pi(\mu,\nu)} \int_{K\times K}  |u-v| \,  d \xi(u,v),
\end{align*} where $\Pi(\mu, \nu)$ is the set of probability measures over $K\times K$ whose marginals are $\mu$ and $\nu$, respectively.

If we generically denote $\mu_Z$ the measure associated to a random variable $Z$, the $1$-Wasserstein distance rewrites equivalently as
\begin{align*}
W(\mu, \nu) &= \inf_{\substack{\mu_{X} = \mu \\ \mu_{Y} = \nu}} \mathbb{E}\big[|X - Y|\big].
\end{align*} 
It is standard that, in the compact case we are dealing with, the infimum in both formulae above is in fact a minimum.

\paragraph{Independent Metropolis-Hastings algorithm.} 
We here recall basic definitions underlying the (independent) Metropolis-Hastings algorithm~\cite{BookMCMC1999}.
Assume we are given $f$ with $f : K \rightarrow \R$ in $L^1(\sigma)$ with $\|f\|_1 =1$, as well as a function $g :  K \rightarrow \R$ in $L^1(\sigma)$ with $\|g\|_1 =1$, such that $g$ is lower bounded by a positive constant on $K$.
We also define
\[\forall u,v \in K, \quad \alpha(u,v) := \min\Big(1, \frac{f(u)g(v)}{f(v)g(u)}\Big),\]
with the convention $\alpha(u,v) = 0$ if $f(u)= f(v)=0$. 
The goal of the independent Metropolis-Algorithm is to ultimately draw from the distribution $f$ by only drawing from the distribution $g$ (we shall write $X \sim g$ to mean that $X$ has density $g$ with respect to $\sigma$). 

Now consider the independent Metropolis-Algorithm with proposal distribution $\mathcal{U}_\sigma(K)$. We let $(X_k)_{k \in \N}$ be i.i.d. random variables with density $g$, and $(U_k)_{k \in \N^*}$ be i.i.d. random variables $\mathcal{U}([0,1])$, with independence between $(X_k)$ and $(U_k)$. Set $Z_0 = X_0$ and define for all $k \geq 1$
\begin{align}
Z_k = \begin{cases} 
X_k & \text{ if }\,  U_k \leq  \alpha(X_k, Z_{k-1}) \\
Z_{k-1} & \text{ else }
\end{cases}
\end{align}
Note that we consider a minor variant compared to the usual setting, since the initial point is not deterministic but a random variable also drawn according to $g$.

We shall make use of the following estimate, which can be found e.g. in~\cite{IMH2024}.

\begin{thrm}[\cite{IMH2024}]
\label{imh_wasserstein}
Assume that there exists $M>0$ such that for a.e. $z \in K$, $f(z) \leq Mg(z)$. Let  $\mu$ be the measure of density $f$ with respect to $\sigma$.
Then 
\begin{equation}
\label{imh_estimate}
W(\mu_{Z_k}, \mu) \leq   \mathrm{diam}(K)  \Big(1- \frac{1}{M}\Big)^k.
\end{equation}
\end{thrm}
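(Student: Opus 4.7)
The plan is to prove this via a classical Doeblin-type coupling argument, exploiting the fact that the boundedness assumption $f \leq Mg$ yields a uniform minorization of the independent Metropolis--Hastings transition kernel. Let $P(v, \cdot)$ denote the one-step kernel and write $w(u) := f(u)/g(u)$. The first step is to establish
\[P(v, du) \geq \tfrac{1}{M}\, d\mu(u), \qquad \forall v \in K.\]
This falls out of the accept/reject mechanism: the density at $u$ of the proposed-and-accepted contribution to $P(v,\cdot)$ equals $\alpha(u,v) g(u)$, and a short case split on whether $w(u) \geq w(v)$ or $w(u) < w(v)$, combined with $w \leq M$ almost everywhere, shows that $\alpha(u,v) g(u) \geq f(u)/M$ in both cases.

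Armed with this minorization, I would couple $(Z_k)$ with an auxiliary chain $(Z'_k)$ started from $Z'_0 \sim \mu$, which by invariance of $\mu$ under $P$ satisfies $Z'_k \sim \mu$ for all $k$. The coupling uses an independent sequence $(B_k)$ of Bernoulli$(1/M)$ variables: when $B_k = 1$, both chains are set equal to a common fresh draw from $\mu$; when $B_k = 0$, each evolves according to the renormalised residual kernel $(P(v, \cdot) - \tfrac{1}{M}\mu)/(1 - \tfrac{1}{M})$, which is a bona fide probability kernel exactly because of the minorization. Letting $T := \inf\{k \geq 1 : B_k = 1\}$, the two chains coincide from time $T$ onwards, so on $\{T \leq k\}$ we have $Z_k = Z'_k$ and otherwise $|Z_k - Z'_k| \leq \mathrm{diam}(K)$; since $T$ is geometric of parameter $1/M$, this gives
\[\mathbb{E}[|Z_k - Z'_k|] \leq \mathrm{diam}(K)\, \mathbb{P}(T > k) \leq \mathrm{diam}(K) \Big(1 - \tfrac{1}{M}\Big)^k.\]

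Since the pair $(Z_k, Z'_k)$ is a valid coupling of $\mu_{Z_k}$ and $\mu$, the dual definition of the $1$-Wasserstein distance recalled earlier in this section immediately yields the claimed bound. The main obstacle is the careful bookkeeping to ensure that the proposed coupling really preserves both marginals simultaneously: one must verify that the residual kernel is a probability kernel (which is where the minorization $P(v,\cdot) \geq \tfrac{1}{M}\mu$ is essential) and that the auxiliary chain remains distributed according to $\mu$ at every step. This is the standard technical subtlety of Doeblin-type arguments; once executed, the geometric decay $(1 - 1/M)^k$ falls out transparently from the law of $T$.
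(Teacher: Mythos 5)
Your proof is correct, and it is more self-contained than what the paper actually does for this statement: the paper imports the estimate from~\cite{IMH2024} and, in the remark that follows, merely explains that the version there (stated for the Lebesgue measure, with the sharper constant $\sup_{u}\int_K |u-v|f(v)\,d\sigma(v)$ in place of $\mathrm{diam}(K)$) extends to a general reference measure because the key ingredient is Corollary 4 of~\cite{Tierney1994}, the random initial point being handled by conditioning. The mathematical engine is nevertheless the same in both arguments, namely the Doeblin minorization $P(v,\cdot)\ge \frac{1}{M}\mu$ for the independence sampler, which is exactly what your case split on $w(u)\gtrless w(v)$ establishes and what powers Tierney's uniform ergodicity result. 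What your route buys is a direct coupling proof in the Wasserstein metric that avoids any detour through total variation, and it absorbs the random initial distribution for free, since the law of the coalescence time $T$ does not depend on where either chain starts; no separate conditioning step is needed. Two small points are worth making explicit to complete the bookkeeping you allude to: first, $M\ge 1$ (integrate $f\le Mg$ against $\sigma$ and use $\|f\|_1=\|g\|_1=1$), so the residual kernel $\bigl(P(v,\cdot)-\tfrac{1}{M}\mu\bigr)/\bigl(1-\tfrac{1}{M}\bigr)$ is well defined; second, after time $T$ the two chains must be driven by common randomness (a single draw from the common residual kernel, or the common fresh draw from $\mu$) so that they indeed remain equal thereafter. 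Both are routine, and the resulting bound matches~\eqref{imh_estimate}.
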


\begin{rmrk}
This result is in fact a rather immediate extension of the results of~\cite{IMH2024}. Indeed, these yield the estimate
\[W(\mu_k(u), \mu) \leq  \Big(1- \frac{1}{M}\Big)^k \; \sup_{u \in K} \int_K |u-v| \, f(v) \,  d \sigma(v) \leq  \mathrm{diam}(K)\Big(1- \frac{1}{M}\Big)^k ,\]
where $\mu_k(u)$ is the measure defined by the MH algorithm but starting from the deterministic $Z_0 = u \in K$. 
In~\cite{IMH2024}, this estimate is given for the Lebesgue measure and involves $(1-\e)^k$ with $\e = \inf \tfrac{g}{f}$. As can be seen from how the result is proved thanks to Corollary 4 of~\cite{Tierney1994}, the Lebesgue measure assumption is superfluous, and the relevant quantity can be taken to be (the inverse of) $M = \sup \tfrac{f}{g}$.
Finally, the estimate~\eqref{imh_estimate} with a random initial point $Z_0$ is easily obtained by a conditioning argument.
\end{rmrk}


 \subsection{Extremality of MH points}

Let us now give the fully detailed definition of Metropolis-Hastings random Leja points (MH points).
They are defined recursively as follows. We start from any random variable $Z_0$, typically $Z_0 \sim \mathcal{U}_\sigma(K)$. Then, assuming that $Z_0, \ldots, Z_{n-1}$ have been computed, we use the Metropolis Hastings with proposal distribution $\mathcal{U}_\sigma(K)$, halted at an appropriate index $k$, in order to approach the density of interest, proportional to $|\pi_n|$. 

With the previous notations, we set $g = \tfrac{1}{\sigma(K)}$ and $f = \tfrac{|\pi_n|}{\|\pi_n\|_1}$, and
\[\forall u,v \in K, \quad \alpha_n(u,v) := \min\Big(1, \frac{|\pi_n(u)|}{|\pi_n(v)|}\Big),\]
with the convention $\alpha_n(u,v) = 0$ if $\pi_n(u)= \pi_n(v)=0$. 

The algorithm relies on a sequence of integers $N_n, \; n \geq 1$. For each $n\geq 1$, we let $(X_n^{(k)})_{k \in \N, n \in \N^*}$ be i.i.d. $\mathcal{U}_\sigma(K)$  and  $(U_n^{(k)})_{k \in \N^*, n \in \N^*}$ be i.i.d. $\mathcal{U}([0,1])$, independent from $Z_0$.

Then, we define the sequence of MH points $Z_n$ as follows, starting from the chosen $Z_0$. Assuming that $Z_0, \ldots, Z_{n-1}$ have been computed, we let $\pi_n$ be the corresponding polynomial. Then MH points are defined by $Z_n := Z_{n}^{(N_n)}$ where the sequence $Z_n^{(k)}$ satisfies 
\begin{align}
\label{metropolis_rand_leja}
\begin{cases}
Z_n^{(0)} = X_{n}^{(0)}  \\
Z_{n}^{(k)} = 
 \begin{cases} 
X_n^{(k)} & \text{ if } U_n^{(k)} \leq  \alpha_n(X_n^{(k)}, Z_n^{(k-1)}) \\
Z_{n}^{(k-1)} & \text{ else }
\end{cases}
\end{cases}
\end{align}

\begin{thrm}
\label{thm_mh_leja}
Assume that~\eqref{nonpolar}, \eqref{linf}, \eqref{nikolskii}, \eqref{markov} and~\eqref{atom} hold. Take 
\[N_n \sim n^{\alpha}, \quad \text{with} \quad \alpha > r_\ell.\] 
Then, almost surely, MH points defined by~\eqref{metropolis_rand_leja} asymptotically distribute according to $\mu_K$.
In particular, MH points are almost surely extremal.
\end{thrm}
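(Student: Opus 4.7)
The plan is to verify the two hypotheses of Lemma~\ref{omnipotent_lemma} for the MH points: almost-sure distinctness of the $(Z_n)$ and summability of $\mathbb{P}(|\pi_n(Z_n)| < n^{-\beta}\|\pi_n\|_K)$ for some $\beta \geq 0$. For distinctness, conditionally on $\mathcal{F}_n$ the iterate $Z_n = Z_n^{(N_n)}$ produced by~\eqref{metropolis_rand_leja} always coincides with one of the proposals $X_n^{(k)}$, each of which is uniform on $K$ and independent of $\mathcal{F}_n$. Hypothesis~\eqref{atom} then gives $\mathbb{P}(Z_n \in \{Z_0,\ldots,Z_{n-1}\} \mid \mathcal{F}_n) = 0$, and a countable union yields distinctness.

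The core estimate is obtained by comparing the conditional law of $Z_n$ to the exact target law $\nu_n$ having density $|\pi_n|/\|\pi_n\|_1$ with respect to $\sigma$. The density ratio between $\nu_n$ and the uniform proposal is bounded by $M_n := \sigma(K)\|\pi_n\|_\infty/\|\pi_n\|_1$, which by~\eqref{nikolskii} is deterministically at most $c_\ell \sigma(K) n^{r_\ell}$. Since the proposals and acceptance variables used at step $n$ are independent of $\mathcal{F}_n$, Theorem~\ref{imh_wasserstein} applies conditionally on $\mathcal{F}_n$ and yields
\[W(\mu_{Z_n \mid \mathcal{F}_n},\, \nu_n) \leq \mathrm{diam}(K)\left(1 - \frac{1}{M_n}\right)^{N_n} \leq \mathrm{diam}(K)\,\exp\!\left(-\frac{N_n}{c_\ell \sigma(K) n^{r_\ell}}\right).\]
Because $N_n \sim n^\alpha$ with $\alpha > r_\ell$, this upper bound decays faster than any polynomial in $n$.

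Next, I couple conditionally on $\mathcal{F}_n$ the variable $Z_n$ with a variable $\tilde Z_n$ of exact law $\nu_n$ so that $\mathbb{E}[|Z_n - \tilde Z_n| \mid \mathcal{F}_n]$ equals the above conditional Wasserstein distance. Applying Lemma~\ref{taylor} to the polynomial $\pi_n$ gives $||\pi_n(Z_n)| - |\pi_n(\tilde Z_n)|| \leq \|\pi_n\|_K(e^{c_m n^{r_m}|Z_n - \tilde Z_n|} - 1)$, so fixing $\beta = r_\ell + 1 + \varepsilon$ for some $\varepsilon > 0$, whenever both $|\pi_n(\tilde Z_n)| \geq 2 n^{-\beta}\|\pi_n\|_K$ and $|Z_n - \tilde Z_n| \leq c\, n^{-(\beta+r_m)}$ hold (with $c$ absorbing the $\log(1+x)\sim x$ asymptotics), one has $|\pi_n(Z_n)| \geq n^{-\beta}\|\pi_n\|_K$. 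The union bound
\[\mathbb{P}(|\pi_n(Z_n)| < n^{-\beta}\|\pi_n\|_K) \leq \mathbb{P}(|\pi_n(\tilde Z_n)| < 2 n^{-\beta}\|\pi_n\|_K) + \mathbb{P}(|Z_n - \tilde Z_n| > c\, n^{-(\beta+r_m)})\]
then has a summable first term by Lemma~\ref{lemme_truc} applied to $\tilde Z_n$ (which gives $O(n^{r_\ell-\beta}) = O(n^{-1-\varepsilon})$ after using~\eqref{linf}) and a summable second term by Markov's probability inequality combined with the superpolynomial Wasserstein bound above. Lemma~\ref{omnipotent_lemma} then yields the theorem.

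The main delicate point will be producing the jointly measurable conditional coupling $(\tilde Z_n, Z_n)$, since Theorem~\ref{imh_wasserstein} only provides an unconditional Wasserstein bound. This should follow from a standard measurable selection of optimal couplings on the compact Polish space $K \times K$; alternatively, one could bypass the explicit coupling by applying Kantorovich--Rubinstein duality to a Lipschitz regularisation of the indicator $\mathbf{1}_{|\pi_n| < \eta}$, combined directly with the Nikolskii tail estimate from Lemma~\ref{lemme_truc}.
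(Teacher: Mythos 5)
Your proposal is correct and follows essentially the same route as the paper's proof: almost-sure distinctness via~\eqref{atom}, coupling $Z_n = Z_n^{(N_n)}$ with an exact random Leja point $\tilde Z_n$ realising the conditional Wasserstein distance, a union bound splitting into the event that $\tilde Z_n$ is bad (handled by Lemma~\ref{lemme_truc}) and the event that $Z_n$ is far from $\tilde Z_n$ (handled by Lemma~\ref{taylor}, Markov's inequality and Theorem~\ref{imh_wasserstein}), and finally Lemma~\ref{omnipotent_lemma} with $\beta = r_\ell + 1 + \varepsilon$. The only differences are cosmetic (you phrase the decomposition as a clean union bound rather than intersecting with the closeness event and its complement), and your closing remark on the measurability of the conditional optimal coupling flags a point the paper passes over silently.
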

\begin{proof}
First, we note that assumption~\eqref{atom} is so that the MH points are almost surely distinct. Indeed, we have $Z_n = X_n^{(k)}$ for some (random) index $k$, and the $X_n^{(k)}$ for $n \geq 1$, $1 \leq k \leq N_n$ are all almost surely distinct under~\eqref{atom}.

In order to apply Lemma~\ref{omnipotent_lemma}, we estimate $\mathbb{P}\left(|\pi_n(Z_n)| < \tau  \|\pi_n\|_\infty \right)$ for a given $0 < \tau \leq 1$. To do so, we look for an estimate for the above quantity with $Z_n$ replaced by $Z_n^{(k)}$, with the aim to utimately set $k = N_n$ since $Z_n = Z_n^{(N_n)}$.

Let $\tilde Z_n$ be any random Leja point associated to $Z_0, \ldots, Z_{n-1}$, i.e., of density (proportional to $|\pi_n|$ conditionally on  $Z_0, \ldots, Z_{n-1}$. We will later on specify how $\tilde Z_n$ is coupled to $Z_n^{(N_n)}$.


First, we decompose
\begin{align*} \mathbb{P}\left(|\pi_n(Z_n^{(k)})| < \tau  \|\pi_n\|_\infty \, \Big|  \, \mathcal{F}_n\right) & = \mathbb{P}\left(|\pi_n(Z_n^{(k)})| < \tau  \|\pi_n\|_\infty,  \; \left||\pi_n(Z_n^{(k)})| - |\pi_n(\tilde Z_n)|\right| \leq \tau \|\pi_n\|_\infty
\, \Big|  \, \mathcal{F}_n\right) \\
& + \mathbb{P}\left(|\pi_n(Z_n^{(k)})| < \tau  \|\pi_n\|_\infty,  \; \left||\pi_n(Z_n^{(k)})| - |\pi_n(\tilde Z_n)|\right| > \tau \|\pi_n\|_\infty \, \Big|  \, \mathcal{F}_n\right)
\end{align*}
The first term satisfies 
\begin{align*} \mathbb{P}\left(|\pi_n(Z_n^{(k)})| < \tau  \|\pi_n\|_\infty,  \; \left||\pi_n(Z_n^{(k)})| - |\pi_n(\tilde Z_n)|\right| \leq \tau \|\pi_n\|_\infty
\, \Big|  \, \mathcal{F}_n\right)
& \leq \mathbb{P}\left(|\pi_n(\tilde Z_n)| < 2 \tau  \|\pi_n\|_\infty \, \Big|  \, \mathcal{F}_n\right)    \\
& \leq 2 \, \tau  \sigma(K) c_\ell n^{r_\ell},
\end{align*}
thanks to Lemma~\ref{lemme_truc}.

For the second term, we use the estimate from Lemma~\ref{taylor} followed by Markov's probability inequality to obtain
\begin{align*}  \mathbb{P}\bigg(|\pi_n(Z_n^{(k)})| < \tau  \|\pi_n\|_\infty,  \; \left||\pi_n(Z_n^{(k)})| - |\pi_n(\tilde Z_n)|\right|&  > \tau \|\pi_n\|_\infty \, \Big|  \, \mathcal{F}_n\bigg) \\
& \leq \mathbb{P}\left(\left||\pi_n(Z_n^{(k)})| - |\pi_n(\tilde Z_n)|\right| > \tau \|\pi_n\|_\infty \, \Big|  \, \mathcal{F}_n\right) \\
& \leq   \mathbb{P}\left[e^{c_m n^{r_m} |Z_n^{(k)} - \tilde Z_n|} - 1 >\tau  \, \Big| \mathcal{F}_n \,  \right] \\
& \leq  \frac{c_m n^{r_m}}{\ln(1+\tau)} \,  \mathbb{E}\left[\big|Z_n^{(k)} - \tilde Z_n\big|\, \Big| \mathcal{F}_n \,  \right] \end{align*}
Now, we set $k = N_n$, and we specifically choose $\tilde Z_n$ to be coupled to $Z_n^{(N_n)}$ in such a way that (conditionally on $Z_0, \ldots, Z_{n-1}$), it realises the infimum within the Wasserstein distance, that is,
\[\mathbb{E}\left[\big|Z_n^{(N_n)} - \tilde Z_n\big|\, \Big| \mathcal{F}_n \,  \right]  = W(\mu_{\tilde Z_n^{(N_n)}}, \mu_{\tilde Z_n}).\]
Now, conditionally on $(Z_0, \ldots, Z_{n-1})$, $\tilde Z_n$ has density $\tfrac{|\pi_n|}{\|\pi_n\|_1}$. The assumptions made about independence ensure that the above Wasserstein distance is controlled as per estimate~\eqref{imh_estimate} of Theorem~\ref{imh_wasserstein} by
\[W(\mu_{\tilde Z_n^{(N_n)}}, \mu_{\tilde Z_n}) \leq \mathrm{diam}(K)   \bigg(1- \frac{1}{M_n}\bigg)^{N_n},\]
where $M_n$ is any constant such that $\tfrac{|\pi_n(z)|}{\|\pi_n\|_1} \leq M_n \tfrac{1}{\sigma(K)}$ for a.e. $z \in K$. By~\eqref{nikolskii} (and~\eqref{linf}), we may choose $M_n = \sigma(K) c_\ell n^{r_\ell}$. 

Hence, all in all and after taking the expectation we have derived the estimate
\[\mathbb{P}\left(|\pi_n(Z_n)| < \tau  \|\pi_n\|_\infty\right)  \leq   \tau  \sigma(K) c_\ell n^{r_\ell} +  \mathrm{diam}(K)  \frac{c_m n^{r_m}}{\ln(1+\tau)}  \left(1- \frac{1}{M_n}\right)^{N_n}\]

To conclude, we need to pick $\tau = \tau_n = n^{-\beta}$ with a properly chosen value of $\beta$, so that the series  $\textstyle \sum \mathbb{P}\left(|\pi_n(Z_n)| < \tau_n  \|\pi_n\|_\infty \, \right)$ converges. 
Since $N_n  \sim n^\alpha$ with $\alpha  >r_\ell$, the geometric factor $(1- \tfrac{1}{M_n})^{N_n}$ converges exponentially quickly to $0$. Hence if we take $\beta = r_\ell + 1+ \e$ for some $\e>0$, both terms within the estimate lead to convergent series, and the proof is finished.
\end{proof}

The estimate obtained in the proof above and Lemma~\ref{omnipotent_lemma} combined yield the  following result about separation of MH points. 
\begin{crllr}
Assume that~\eqref{nonpolar}, \eqref{linf}, \eqref{nikolskii} and \eqref{markov} hold. 
Then, for any $\e>0$, MH points almost surely satisfy a separation of order $1 + r_m + r_\ell + \e$.
\end{crllr}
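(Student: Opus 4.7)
The plan is to invoke Lemma~\ref{omnipotent_lemma} directly, using the probabilistic estimate already established in the proof of Theorem~\ref{thm_mh_leja}. Recall that Lemma~\ref{omnipotent_lemma} asserts that if $(Z_n)$ are almost surely distinct and if, for some $\beta \geq 0$, the series $\sum_n \mathbb{P}\big(|\pi_n(Z_n)| < n^{-\beta}\|\pi_n\|_K\big)$ converges, then (assuming \eqref{markov}) the points almost surely satisfy a separation of order $\beta + r_m$. The separation statement for MH points is therefore an immediate corollary of whatever $\beta$ was used to prove extremality.

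Concretely, I would first remark that assumption \eqref{atom} ensures the MH points are almost surely distinct, exactly as in the opening of the proof of Theorem~\ref{thm_mh_leja}. Then I would recall the bound derived there: for any $0 < \tau \leq 1$,
\[
\mathbb{P}\big(|\pi_n(Z_n)| < \tau \|\pi_n\|_\infty\big) \leq \tau \, \sigma(K) c_\ell n^{r_\ell} + \mathrm{diam}(K)\,\frac{c_m n^{r_m}}{\ln(1+\tau)} \Big(1-\tfrac{1}{M_n}\Big)^{N_n},
\]
with $M_n = \sigma(K) c_\ell n^{r_\ell}$ and $N_n \sim n^\alpha$ with $\alpha > r_\ell$. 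Using \eqref{linf} to identify $\|\pi_n\|_\infty$ with $\|\pi_n\|_K$ and specialising to $\tau = \tau_n = n^{-\beta}$ with $\beta = 1 + r_\ell + \varepsilon$, the first term contributes $O(n^{-(1+\varepsilon)})$ and the second term decays super-polynomially (geometric in $N_n$, modulated by $n^{r_m}/\ln(1+n^{-\beta}) \sim n^{r_m + \beta}$). Hence the series $\sum_n \mathbb{P}\big(|\pi_n(Z_n)| < n^{-\beta}\|\pi_n\|_K\big)$ converges.

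Applying Lemma~\ref{omnipotent_lemma} with this value of $\beta$ yields almost sure separation of order $\beta + r_m = 1 + r_m + r_\ell + \varepsilon$, as claimed. There is no real obstacle here, since all the analytical work was already carried out inside the proof of Theorem~\ref{thm_mh_leja}; the only thing to verify carefully is that the $\varepsilon$ chosen to secure convergence in the extremality proof is the same $\varepsilon$ that appears in the separation exponent, which is tautological from the statement of Lemma~\ref{omnipotent_lemma}.
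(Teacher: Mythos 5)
Your proposal is correct and follows exactly the paper's route: the paper proves this corollary by combining the estimate already derived in the proof of Theorem~\ref{thm_mh_leja} (with $\tau_n = n^{-\beta}$, $\beta = 1+r_\ell+\e$) with Lemma~\ref{omnipotent_lemma}, which is precisely what you do. The only cosmetic remark is that almost-sure distinctness of the points relies on~\eqref{atom}, which the corollary's hypothesis list omits (as does the paper's own statement), so your explicit invocation of it is if anything more careful than the source.
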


\section{RM points}
\label{sec_random_uniform}

 \subsection{Covering numbers}
 \label{subsec_cov}
 In order to quantify how well uniformly sampled points will cover $K$, we need to discuss covering numbers. In fact, the correct notion of covering numbers should be adapted to $\sigma$.
 We let $\overline{B}(w, \delta)$ stand for the closed ball of center $w$ and radius $\delta$ for the topology in $K$, i.e., the set $\{z \in K, \; |z-w| \leq \delta\}$. 
 
 \begin{dfntn}
 We say that $(w_1, \ldots, w_p) \in K^p$ is a $\delta$-cover for $(K, \sigma)$ if 
 \[ \sigma\bigg(K \setminus \bigcup_{j=1}^p \overline{B}(w_j, \delta)  \bigg) = 0.\]
 We let $N_K(\delta)$ be the associated~\textit{covering number}, i.e., the minimal number of points needed to obtain a $\delta$-cover for $(K, \sigma)$.
 \end{dfntn}
Note that if $(w_1, \ldots, w_p)$ is a $\delta$-cover in the usual sense, that is, when $K \subset  \cup_{j=1}^p \overline{B}(w_j, \delta)$, then it is also a $\delta$-cover for $(K, \sigma)$. In particular, usual covering numbers provide upper bounds for those defined above.

Our assumption regarding covering numbers for $(K,\sigma)$ will be of the form
  \begin{equation}  N_K(\delta) \leq c_c \, \delta^{-r_c} \tag{H6} \label{covering}\end{equation}
for some $c_c > 0$ and $r_c>0$. We call $r_c$ the \textit{covering exponent} of $(K, \sigma)$.
For instance, covering numbers for smooth Jordan curves satisfy $r_c = 1$, and we may take $r_c = 1$ for $1$-dimensional convex sets (line segments), and $r_c = 2$ for $2$-dimensional convex sets.

Now we give two consecutive lemmata; similar arguments can be found in~\cite{RandomisedWAM2023}[Theorem 4.1]. Compared to the latter theorem, our results do not require $K$ to be convex. This is because we are dealing with polynomials for which exact Taylor formulae exist, making estimates such as the one of Lemma~\ref{taylor} possible.

\begin{lmm}
Assume that~\eqref{linf} and~\eqref{markov} hold.
Let $\delta>0$ and $w_1, \ldots, w_{p}$ be $p$ points in $K$ associated to a $\delta$-cover of $K$.

Let $(u_1, \ldots, u_{N_n})$ be $N_n$ points in $K$ such that for any $j \in \{1, \ldots, p\}$, there exists $k \in \{1, \ldots, N_n\}$ such that $|u_k - w_j| \leq \delta$.
Then for any $P \in \C_n[X]$ and $\delta$ small enough,
\[ \|P\|_K \leq \frac{1}{2-e^{2 \delta c_m n^{r_m}}} \max_{1\leq k \leq {N_n}} |P(u_k)|.\]
\end{lmm}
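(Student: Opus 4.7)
The plan is to reduce the estimate to a near-maximiser argument, combined with the polynomial Taylor bound of Lemma~\ref{taylor}. First, by continuity of $|P|$ on the compact $K$, there exists $z^\star \in K$ with $|P(z^\star)|=\|P\|_K$. The issue is that the $\delta$-cover hypothesis is only $\sigma$-almost everywhere, so $z^\star$ itself may fail to lie in $\bigcup_j \overline{B}(w_j,\delta)$. This is where assumption~\eqref{linf} enters: it implies that every nonempty open subset of $K$ has positive $\sigma$-measure, because otherwise one could construct a continuous bump function contradicting $\|f\|_K=\|f\|_\infty$. In particular, every open neighbourhood of $z^\star$ must meet $\bigcup_j \overline{B}(w_j,\delta)$.

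Consequently, for every $\eta>0$ I can pick $\tilde z \in K$ with $|\tilde z - z^\star|\le\eta$ and $\tilde z \in \overline{B}(w_{j},\delta)$ for some index $j$. By the covering hypothesis on the $u_k$'s, there exists $k$ with $|u_k-w_j|\le \delta$, so that $|u_k-\tilde z|\le 2\delta$. Applying Lemma~\ref{taylor} to $P$ gives
\[
|P(u_k)-P(\tilde z)| \leq \|P\|_K\bigl(e^{2\delta c_m n^{r_m}}-1\bigr),
\]
and hence
\[
\max_{1\le k\le N_n}|P(u_k)| \;\ge\; |P(\tilde z)| - \|P\|_K\bigl(e^{2\delta c_m n^{r_m}}-1\bigr).
\]

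Letting $\eta\to 0$, continuity of $P$ yields $|P(\tilde z)|\to |P(z^\star)| = \|P\|_K$, so
\[
\max_{1\le k\le N_n}|P(u_k)| \;\ge\; \|P\|_K \bigl(2 - e^{2\delta c_m n^{r_m}}\bigr).
\]
For $\delta$ small enough (depending on $n$), the factor $2-e^{2\delta c_m n^{r_m}}$ is strictly positive, and dividing gives the announced inequality. The only subtle point, and really the whole content of the argument, is the use of~\eqref{linf} to guarantee that the $\sigma$-a.e. nature of the $\delta$-cover is compatible with the sup-norm maximiser $z^\star$; the rest is a direct application of Lemma~\ref{taylor} and a triangle inequality in the plane.
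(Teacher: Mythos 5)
Your proof is correct and follows essentially the same route as the paper's: the key step in both is the bound $|P(\tilde z)-P(u_k)|\leq \|P\|_K\big(e^{2\delta c_m n^{r_m}}-1\big)$ obtained from Lemma~\ref{taylor} by routing through the cover point $w_j$ via the triangle inequality. The only difference is how the $\sigma$-almost-everywhere nature of the cover is reconciled with the supremum norm: the paper establishes the pointwise bound for a.e.\ $z\in K$, takes the essential supremum, and invokes~\eqref{linf} directly in the form $\|P\|_K=\|P\|_\infty$, whereas you first deduce from~\eqref{linf} that nonempty open subsets of $K$ have positive $\sigma$-measure and then approximate the exact maximiser $z^\star$ --- both are valid uses of the same hypothesis.
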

\begin{proof}
By Lemma~\ref{taylor}, for $z \in \cup_{j=1}^{p} \overline{B}(w_j, \delta)$ fixed and any $u_k$
\[|P(z) - P(u_k)| \leq \|P\|_K (e^{c_m n^{r_m} |z-u_k|}-1).\]
Now choose $j$ such that $|z-w_j| \leq \delta$, and then $u_k$ such that $|u_k - w_j| \leq \delta$, so that for that choice of $u_k$ the previous inequality yields
\[|P(z) - P(u_k)| \leq \|P\|_K (e^{2 \delta c_m n^{r_m} }-1),\]
hence 
\[|P(z)| \leq  |P(u_k)|  +   (e^{2 \delta c_m n^{r_m}  }-1)\|P\|_K \leq   \max_{1\leq k \leq {N_n}} |P(u_k)| + (e^{2 \delta c_m n^{r_m}  }-1) \|P\|_K.  \]
This property holds for all $z \in \cup_{j=1}^{p} \overline{B}(w_j, \delta)$, hence by the covering property it holds for a.e. $z \in K$.
Thus we obtain by taking the (essential with respect to $\sigma$) supremum
\[\|P\|_\infty \leq  \max_{1\leq k \leq {N_n}} |P(u_k)| + (e^{2 \delta c_m n^{r_m}  }-1) \|P\|_K.\]
The announced equality follows by using $\|P\|_K = \|P\|_\infty$ thanks to~\eqref{linf}, and by rearranging terms assuming that $\delta$ is small enough (so that $2 - e^{2\delta c_m n^{r_m}} >0$).

\end{proof}

\begin{lmm}
\label{covering_random}
Assume that~\eqref{linf} and~\eqref{markov} hold.
Let $\delta>0$ and $w_1, \ldots, w_{p}$ be $p$ points in $K$ associated to a $\delta$-cover of $K$.
Assume that $X_1, \ldots, X_{N_n}$ are $N_n$ i.i.d. points following $\mathcal{U}_\sigma(K)$.

Then with probability at least $1- p \, e^{- \frac{N_n}{p}}$, there holds for all $P \in \C_n[X]$ and $\delta$ small enough
\[ \|P\|_K \leq \frac{1}{2-e^{2 \delta c_m n^{r_m}}} \max_{1\leq k \leq {N_n}} |P(X_k)|.\]
\end{lmm}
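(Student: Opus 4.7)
The goal is to reduce this probabilistic statement to the preceding (deterministic) lemma. The strategy is to exhibit a high-probability event on which every ball $\overline{B}(w_j, \delta)$ contains at least one of the $X_k$; on such an event, the preceding lemma applies verbatim with $u_k := X_k$ and yields the announced bound uniformly in $P \in \C_n[X]$.

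I would first pass from the $\delta$-cover to a measurable partition $A_1, \ldots, A_p$ of $K$ (up to a $\sigma$-null set) with $A_j \subset \overline{B}(w_j, \delta)$ and $\sigma(A_j) \geq \sigma(K)/p$ for every $j$. A natural starting point is the Voronoi cell of $w_j$ intersected with $K$; the freedom provided by the overlaps between the $\delta$-balls can then be used to redistribute mass and balance the cells while keeping $A_j \subset \overline{B}(w_j, \delta)$. Writing $p_j := \sigma(A_j)/\sigma(K) \geq 1/p$, the independence of the $X_k$ together with $X_k \sim \U_\sigma(K)$ gives
\[\mathbb{P}(X_k \notin A_j \text{ for all } k \leq N_n) = (1 - p_j)^{N_n} \leq e^{-N_n p_j} \leq e^{-N_n/p}.\]
A union bound over the $p$ cells then yields
\[\mathbb{P}\left(\exists j :\, X_k \notin A_j \text{ for all } k \leq N_n\right) \leq p\, e^{-N_n/p}.\]
On the complementary event, for each $j$ some $X_k$ lies in $A_j \subset \overline{B}(w_j, \delta)$, so $|X_k - w_j| \leq \delta$, and the hypotheses of the preceding lemma are met. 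Applying that lemma finishes the proof.

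The main obstacle I anticipate is carrying out the balancing step cleanly, namely constructing cells $A_j$ with $A_j \subset \overline{B}(w_j, \delta)$ and $\sigma(A_j) \geq \sigma(K)/p$ simultaneously; this rearrangement rests on the overlapping structure of the cover and is the only non-trivial measure-theoretic ingredient. The probabilistic step afterwards is a textbook coupon-collector union bound, and the reduction to the deterministic lemma is automatic once the sample is known to meet every ball.
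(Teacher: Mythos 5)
Your overall strategy is exactly the paper's: reduce to the event that every ball $\overline{B}(w_j,\delta)$ contains at least one sample point, so that the preceding deterministic lemma applies with $u_k = X_k$, then control the failure probability by a union bound over $j$. The probabilistic part of your argument is fine. The genuine gap is the step you yourself flag as the main obstacle: a partition $A_1,\dots,A_p$ of $K$ (mod $\sigma$-null sets) with $A_j \subset \overline{B}(w_j,\delta)$ and $\sigma(A_j) \geq \sigma(K)/p$ does \emph{not} exist in general. Since the $A_j$ are disjoint and exhaust $K$, the constraints force $\sigma(A_j) = \sigma(K)/p$ for every $j$, which is impossible as soon as a single ball of the cover satisfies $\sigma(\overline{B}(w_j,\delta)) < \sigma(K)/p$ --- and nothing in the definition of a $\delta$-cover rules this out. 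For instance, take $K=[0,1]$ with Lebesgue measure, $\delta = 1/4$ and centers $1/4$, $3/4$, $0$: this is a $\delta$-cover with $p=3$, but the third ball $[0,1/4]$ has mass $1/4 < 1/3$, so no subset of it can carry mass $\sigma(K)/p$. No redistribution exploiting the overlaps can repair this, because the obstruction is the total mass available inside that one ball. Worse, in such a configuration the event ``every ball is hit'' fails with probability $(1-\sigma_j)^{N_n}$ for $\sigma_j<1/p$, which exceeds $p\,e^{-N_n/p}$ for $N_n$ large; so this exact reduction cannot deliver the stated constant without an additional hypothesis on the cover (e.g.\ that each ball carries mass at least $\sigma(K)/p$, as holds for reasonably efficient covers).

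For what it is worth, the paper's own proof trips over the very same point. It bounds the failure probability by $\sum_j e^{-\sigma_j N_n}$ with $\sigma_j = \sigma(\overline{B}(w_j,\delta))/\sigma(K)$, and then invokes Jensen's inequality for the map $\sigma \mapsto e^{-N_n\sigma}$ (described there as concave) to conclude $\sum_j e^{-\sigma_j N_n} \leq p\, e^{-(\sum_j \sigma_j) N_n/p} \leq p\,e^{-N_n/p}$. That map is convex, so Jensen gives the reverse inequality; the only information actually available, $\sum_j \sigma_j \geq 1$, does not imply the per-ball lower bound $\sigma_j \geq 1/p$ that both arguments secretly need. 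Your instinct that this passage requires a genuine argument is therefore sharper than the paper's treatment, but the balanced-partition repair you propose is not available. A clean fix is to add the assumption (or to choose the cover so) that $\sigma(\overline{B}(w_j,\delta)) \geq \sigma(K)/p$ for every $j$; under that hypothesis your union bound --- or the paper's, with Jensen removed --- goes through verbatim.
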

\begin{proof}
By the previous lemma, the result will be proved if we establish that with probability at least $\textstyle 1- p \, e^{-\frac{N_n}{p}}$, for any $j \in \{1, \ldots, p\}$, there exists $k \in \{1, \ldots N_n\}$ such that $|X_k - w_j| \leq \delta$. 

First, we note that the covering property entails 
\[\sigma(K) = \sigma\bigg(\bigcup_{j=1}^{p} \overline{B}(w_j, \delta)\bigg) \leq  \, \sum_{j=1}^{p}  \sigma(\overline{B}(w_j, \delta)),\]

For a fixed $j \in \{1, \ldots, p\}$, we have by the i.i.d. uniform hypothesis
\begin{align*}\mathbb{P}\left(\exists k \in \{1, \ldots, N_n\}, \, X_k \in \overline{B}(w_j, \delta) \right) & = 1- (\mathbb{P}(X_1 \notin \overline{B}(w_j, \delta)))^{N_n}  = 1 - \Big(1-\frac{\sigma(\overline{B}(w_j, \delta)}{\sigma(K)}\Big)^{N_n} \\
& \geq   1 -e^{- \frac{\sigma(\overline{B}(w_j, \delta))}{\sigma(K)} N_n}
\end{align*}
To make notations less cumbersome, let $\sigma_j:= \tfrac{\sigma(\overline{B}(w_j, \delta))}{\sigma(K)}$, so that taking a union bound we have found
\begin{align*}\mathbb{P}\left(\forall j \in \{1, \ldots, p\}, \; \exists k \in \{1, \ldots, N_n\}, \, X_k \in \overline{B}(w_j, \delta) \right) \geq  1 - \sum_{j=1}^{p}  e^{- \sigma_j N_n}.
\end{align*}
Jensen's inequality applied to the concave function $\sigma \mapsto e^{- N_n \sigma}$ leads to 
\begin{align*}\mathbb{P}\left(\forall j \in \{1, \ldots, p\}, \; \exists k \in \{1, \ldots, N_n\}, \, X_k \in \overline{B}(w_j, \delta) \right) \geq  1 - p \, e^{- (\sum_{j=1}^{p} \sigma_j)\frac{N_n}{ p}} \geq 1 - p e^{- \frac{N_n}{p}},
\end{align*}
where the last inequality follows from $\textstyle \sum_{j=1}^{p} \sigma_j \geq 1$.

\end{proof}

\subsection{Pseudo Leja points with randomised mesh}
The algorithm underlying RM points relies on a sequence of integers $N_n, \; n \geq 1$. We let $(X_n^{(k)})_{k \in \N, n \in \N^*}$ be i.i.d. $\mathcal{U}_\sigma(K)$, with the assumption that these are independent from $Z_0$.

Inspired by the randomised weakly admissible meshes introduced in~\cite{RandomisedWAM2023}, the construction we are interested in is given by
\begin{align}
\label{rand_leja_mesh} 
Z_n   \in \argmax_{1 \leq k \leq N_n}  |\pi_n(X_n^{(k)})|.
\end{align}
Note that, assuming~\eqref{atom}, there is (almost surely) no ambiguity in the choice of~$Z_n$.

Thanks to the recent result in~\cite{Totik2023}, we can actually prove a bit more about these points: they almost surely have subexponential Lebesgue constant.
\begin{thrm}
\label{thm_leja_mesh}
Assume that~\eqref{nonpolar}, \eqref{linf}, \eqref{markov}, \eqref{atom},~\eqref{covering} hold. Take
\[N_n \sim n^{\alpha}, \quad \text{with} \quad \alpha > r_m  \, r_c.\] 
 Then almost surely, RM points defined by~\eqref{rand_leja_mesh} have subexponential Lebesgue constant.
In particular, they are almost surely extremal.
\end{thrm}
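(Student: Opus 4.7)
The plan is to reduce to Theorem~1 of~\cite{Totik2023} by showing that, almost surely, the RM points form a sequence of pseudo-Leja points of order $0$. Combined with~\eqref{nonpolar}, this immediately yields subexponential Lebesgue constants, and in particular extremality.

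First I would fix a constant $c > 0$ small enough that $\tau := 2 - e^{2 c c_m} > 0$, and set $\delta_n := c / n^{r_m}$. By~\eqref{covering}, there is a $\delta_n$-cover of $(K,\sigma)$ of cardinality $p_n \leq c_c \, \delta_n^{-r_c} \lesssim n^{r_m r_c}$. Applying Lemma~\ref{covering_random} conditionally on $\mathcal{F}_n$ to the (random) polynomial $\pi_n$ together with the $N_n$ independent uniform samples $(X_n^{(k)})_{1 \leq k \leq N_n}$, one obtains that with conditional probability at least $1 - p_n \, e^{-N_n / p_n}$,
\[ |\pi_n(Z_n)| \;=\; \max_{1 \leq k \leq N_n} |\pi_n(X_n^{(k)})| \;\geq\; \tau\, \|\pi_n\|_K. \]

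Second, since $\alpha > r_m r_c$, the ratio $N_n / p_n$ grows at least like $n^{\alpha - r_m r_c} \to \infty$, so $\sum_n p_n\,e^{-N_n / p_n}$ converges. Taking expectations in the conditional bound and applying Borel--Cantelli, I conclude that almost surely $|\pi_n(Z_n)| \geq \tau \|\pi_n\|_K$ for all sufficiently large $n$. Assumption~\eqref{atom} ensures that the uniform samples, and hence the RM points, are almost surely distinct; the finitely many initial indices can be absorbed into a smaller positive constant exactly as in the proof of Lemma~\ref{omnipotent_lemma}. This establishes that RM points are almost surely pseudo-Leja points of order $0$, and Theorem~1 of~\cite{Totik2023} concludes the argument.

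The main obstacle lies in the delicate balance in the first step: the choice $\delta_n \sim n^{-r_m}$ is dictated by the Bernstein--Taylor estimate of Lemma~\ref{taylor} in order to keep $\tau$ a positive constant (rather than decaying with $n$), and this in turn forces $p_n \sim n^{r_m r_c}$, hence the threshold $\alpha > r_m r_c$ on the number of drawn points. Any weaker control on the constant $\tau$ would only yield order $\beta > 0$ pseudo-Leja points; this would still give extremality via Lemma~\ref{omnipotent_lemma}, but would miss the stronger subexponential Lebesgue-constant conclusion afforded by~\cite{Totik2023}.
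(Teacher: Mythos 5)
Your argument is correct and follows essentially the same route as the paper: apply Lemma~\ref{covering_random} conditionally on $\mathcal{F}_n$ with a cover of size $p_n\lesssim n^{r_m r_c}$, sum the resulting bound $p_n e^{-N_n/p_n}$, invoke Borel--Cantelli to get almost-sure pseudo-Leja points of order $0$, and conclude via~\cite{Totik2023}. The only (immaterial) difference is your choice $\delta_n = c\,n^{-r_m}$ giving a fixed constant $\tau = 2-e^{2cc_m}$, where the paper takes $\delta_n = n^{-\gamma}$ with $r_m<\gamma<\alpha/r_c$ so that the constant may be taken to be $\tfrac12$; both yield order-$0$ pseudo-Leja points and the same threshold $\alpha > r_m r_c$.
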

\begin{proof}
As in the case of MH points, assumption~\eqref{atom} ensures that RM points are almost surely distinct.

By definition, we have $|\pi_n(Z_n)| =\textstyle  \max_{1 \leq k \leq N_n}  |\pi_n(X_n^{(k)})|$.
Let us show that $\textstyle\sum \mathbb{P}(|\pi_n(Z_n)| < \tfrac{1}{2} \|\pi_n\|_K)$ converges. 

Choose $\delta = n^{-\gamma}$ with $r_m < \gamma < \tfrac{\alpha}{r_c}$. For $n$ large enough, one has $\tfrac{1}{2- e^{2 \delta c_m n^{r_m}}} \leq  2$ since $r_m < \gamma$. 
For such $n$'s, we compute 
\[\mathbb{P}\big(|\pi_n(Z_n)|  <   \tfrac{1}{2} \|\pi_n\|_K \, \big| \, \mathcal{F}_n\big) = \mathbb{P}\Big(\max_{1\leq k \leq {N_n}} |\pi_n(X_n^{(k)})| < \tfrac{1}{2} \|\pi_n\|_K\Big| \, \mathcal{F}_n\Big) \leq p(\delta) e^{- \frac{N_n}{p(\delta)}}, \]
for any $p(\delta)$ such that there exist $p(\delta)$ points realising a $\delta$-cover of $(K,\sigma)$.
The inequality follows from applying the previous lemma, which is made possible by the independence between the two random vectors $(Z_0, \ldots, Z_{n-1})$ and $(X_n^{(1)}, \ldots X_{n}^{(N_n)})$. 

By the hypothesis~\eqref{covering} about covering numbers for $(K, \sigma)$, we may choose $p(\delta) = c_c \, \delta^{-r_c} = c_c \, n^{\gamma \, r_c}$. It follows that upon taking the expectation 
\[\mathbb{P}\big(|\pi_n(Z_n)| <   \tfrac{1}{2} \|\pi_n\|_K \big)  \leq p(\delta) e^{- \frac{N_n}{p(\delta)}} \leq c_c \,  n^{\gamma r_c} e^{-  c_c^{-1} N_n n^{- \gamma r_c}}, \]
an estimate showing that the series does converge since $\gamma < \tfrac{\alpha}{r_c}$.

We have proved that points defined by~\eqref{rand_leja_mesh} are so that, for almost every $\omega \in \Omega$, they satisfy the property defining pseudo Leja points, for $n$ large enough (depending on $\omega$), with $\tau_n = \tau = \tfrac{1}{2}$. Since the points $Z_n$ are almost surely distinct, they are pseudo Leja points for almost every $\omega \in \Omega$, for some $0< \tau(\omega) \leq \tfrac{1}{2}$, upon diminishing the value of $\tau$ to account for small values of $n$. The results of~\cite{Totik2023} show that these points have subexponential Lebesgue constant.
\end{proof}

The previous proof  combined with Lemma~\ref{omnipotent_lemma} leads to the following result as to how well points defined by~\eqref{rand_leja_mesh} are separated. 
\begin{crllr}
Assume that~\eqref{nonpolar}, \eqref{linf}, \eqref{markov}, \eqref{atom},~\eqref{covering} hold. Then RM points defined by~\eqref{rand_leja_mesh} almost surely satisfy a separation of order $1 + r_m$.
\end{crllr}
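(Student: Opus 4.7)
The plan is to apply Lemma~\ref{omnipotent_lemma} with $\beta = 1$. The almost-sure distinctness of RM points is already granted by~\eqref{atom}, exactly as observed at the beginning of the proof of Theorem~\ref{thm_leja_mesh}, so the only thing to verify is that the series $\textstyle\sum_n \mathbb{P}\big(|\pi_n(Z_n)| < n^{-1}\|\pi_n\|_K\big)$ converges.

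The crucial point is that the bound obtained in the course of proving Theorem~\ref{thm_leja_mesh} is much stronger than what is needed to obtain pseudo-Leja points of order~$0$: the geometric factor $e^{-c_c^{-1} N_n n^{-\gamma r_c}}$ decays faster than any polynomial and absorbs any polynomial loss coming either from the covering count or from a polynomially-shrinking threshold. Concretely, for $n \geq 2$ one has the elementary inclusion
\[\big\{|\pi_n(Z_n)| < n^{-1}\|\pi_n\|_K\big\} \subset \big\{|\pi_n(Z_n)| < \tfrac{1}{2}\|\pi_n\|_K\big\},\]
so the estimate
\[\mathbb{P}\big(|\pi_n(Z_n)| < \tfrac{1}{2}\|\pi_n\|_K\big) \leq c_c\, n^{\gamma r_c} e^{-c_c^{-1} N_n n^{-\gamma r_c}},\]
already derived in the proof of Theorem~\ref{thm_leja_mesh} under the choice $r_m < \gamma < \alpha/r_c$, transfers verbatim to the threshold $n^{-1}$. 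The right-hand side is summable because $N_n \sim n^\alpha$ with $\alpha > \gamma r_c$, which yields the required convergence.

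Invoking Lemma~\ref{omnipotent_lemma} with $\beta = 1$ then delivers the separation of order $1 + r_m$. I do not foresee any real obstacle here: once the probabilistic estimate from the proof of Theorem~\ref{thm_leja_mesh} is in hand, the corollary is a direct consequence of the fact that the same bound works with $\tfrac{1}{2}$ replaced by $n^{-1}$, so the entire argument reduces to a one-line monotonicity observation followed by an application of an already-established lemma.
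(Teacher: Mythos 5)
Your proof is correct and matches the paper's (implicit) argument: the bound $\mathbb{P}\big(|\pi_n(Z_n)| < \tfrac{1}{2}\|\pi_n\|_K\big) \leq c_c\, n^{\gamma r_c} e^{-c_c^{-1} N_n n^{-\gamma r_c}}$ from the proof of Theorem~\ref{thm_leja_mesh} dominates the probability with threshold $n^{-1}\|\pi_n\|_K$ for $n \geq 2$, and Lemma~\ref{omnipotent_lemma} with $\beta = 1$ then yields separation of order $1 + r_m$. This is exactly the intended reading of ``the previous proof combined with Lemma~\ref{omnipotent_lemma}''.
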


In the case of subsets of the real line, one can make the above result more precise
 thanks to~\cite{Explicit_Lebesgue_Interval2022}.
\begin{prpstn}
\label{prop_leja_mesh}
Assume that $K \subset \R$ is a finite union of closed intervals, and that~\eqref{linf},~\eqref{atom} hold. Take $N_n \sim n^{\alpha}$ with $\alpha > 2$. 

 Then almost surely, RM points defined by~\eqref{rand_leja_mesh} have polynomially growing Lebesgue constants.
\end{prpstn}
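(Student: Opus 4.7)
The plan is to reduce Proposition~\ref{prop_leja_mesh} to a specialisation of Theorem~\ref{thm_leja_mesh} combined with the quantitative Lebesgue constant estimate for pseudo-Leja points on finite unions of closed intervals proved in~\cite{Explicit_Lebesgue_Interval2022}.

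First, I would verify that the hypotheses of Theorem~\ref{thm_leja_mesh} are available in the present setting. A finite union of nondegenerate closed intervals $K\subset\R$ is nonpolar, so \eqref{nonpolar} holds. Line segments satisfy the Markov inequality \eqref{markov} with exponent $r_m = 2$, and by the remark at the end of Subsection~\ref{subsec_hyp} this extends to finite unions. The covering number of $K$ (with respect to $\sigma$) is bounded above by the usual covering number, which for a one-dimensional set grows like $\delta^{-1}$; hence \eqref{covering} holds with $r_c = 1$. Therefore $r_m \, r_c = 2$, and the hypothesis $\alpha > 2$ of the proposition is exactly the assumption $\alpha > r_m \, r_c$ required by Theorem~\ref{thm_leja_mesh}.

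Next, I would revisit the proof of Theorem~\ref{thm_leja_mesh} to extract the following slightly stronger statement: for almost every $\omega \in \Omega$, the sequence $(Z_n(\omega))$ is a set of pseudo-Leja points of order $0$, i.e.\ there exists $\tau(\omega) > 0$ such that $|\pi_n(Z_n(\omega))| \geq \tau(\omega)\,\|\pi_n(\omega)\|_K$ for every $n \geq 1$. This is exactly what the Borel-Cantelli argument in the proof of Theorem~\ref{thm_leja_mesh} produces (with $\tau_n = 1/2$ for $n \geq n_0(\omega)$, then adjusted down to a positive constant to accommodate finitely many small $n$); the positivity of $\tau(\omega)$ is guaranteed by the almost sure distinctness of the $(Z_n)$, which follows from \eqref{atom}.

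Finally, I would invoke the result of \cite{Explicit_Lebesgue_Interval2022}, which asserts that any pseudo-Leja points of order $0$ with constant $\tau > 0$ on a finite union of closed intervals satisfy $\Lambda_n = O(n^{13/4 + c(\tau)})$ with an explicit $c(\tau) > 0$. Applied to each $\omega$ in the full-measure event of the previous step, this yields $\Lambda_n(\omega) = O(n^{13/4 + c(\tau(\omega))})$, that is, polynomial growth. The main (and only) limitation is that $\tau(\omega)$ genuinely depends on the draw, so the exponent $13/4 + c(\tau(\omega))$ cannot be controlled uniformly in $\omega$; this is precisely why the statement claims polynomial growth rather than a universal polynomial bound. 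There is no substantive technical obstacle beyond this $\omega$-dependence, since both ingredients (Theorem~\ref{thm_leja_mesh} and the bound from \cite{Explicit_Lebesgue_Interval2022}) are already at our disposal.
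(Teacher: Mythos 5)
Your proposal is correct and follows essentially the same route as the paper: check that the hypotheses of Theorem~\ref{thm_leja_mesh} hold with $r_m = 2$, $r_c = 1$ (so $\alpha > 2$ is exactly $\alpha > r_m r_c$), extract from its proof that the points are almost surely pseudo-Leja points of order $0$ with a draw-dependent $\tau(\omega)>0$, and then apply the $\Lambda_n = O(n^{13/4 + c(\tau)})$ bound of~\cite{Explicit_Lebesgue_Interval2022}. Your remark that the exponent cannot be made uniform in $\omega$ matches the paper's own caveat about the constant $c(\omega)$ being unknown.
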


\begin{proof}
First, note that such sets are nonpolar. Also, one can fix $r_m = 2$, $r_c = 1$, hence $\alpha > r_m r_c$ becomes $\alpha>2$.
As seen with Theorem~\ref{thm_leja_mesh}, this choice ensures that the points of interest almost surely are pseudo Leja points of order $0$. 

Then by the results of~\cite{Explicit_Lebesgue_Interval2022}, it follows that almost surely in $\omega \in \Omega$, $\Lambda_n(\omega) = O(n^{13/4 + c(\omega)})$ for some unknown constant~$c(\omega)$.
\end{proof}

\section{Numerical experiments}

This section compiles numerical simulations aiming at
\begin{itemize}
\item confirming our theoretical results about extremality of various random points, 
\item investigating properties that remain ouf of reach from the theoretical point of view (mostly, Lebesgue constants),
\item comparing various random families points between themselves, as well as, when it is relevant, with deterministic (pseudo)-Leja points, in terms of accuracy and computational cost.
\end{itemize}
\label{sec_num}
In order to do so, we consider three (types of) compact sets:  the interval $K = [-1,1] \times \{0\}$, the unit disk, and finally polygons, with their natural corresponding arclength or area measures $\sigma$. 

As we shall see, numerical evidence suggests that all our random points lead to almost surely polynomially growing Lebesgue constants.

\subsection{Generalities}
All random families points start with some random variable $Z_0$; we will always take $Z_0 \sim \U_\sigma(K)$. 

\paragraph{Pseudocode for MH points.}
For the sake of reproducibility, we provide below a pseudocode for how to generate the $n$th MH point assuming $z_0, \ldots, z_{n-1}$ have already been computed. In order to evaluate $\pi_n$ as little as possible, it is worth storing the value of $\pi_n(Z)$ where $Z$ stands for the current point along the Metropolis-Hastings iterates.    

\begin{algorithm}[h!]
\caption{Given $\pi_n(z) = \textstyle \prod_{j=0}^{n-1}(z-z_j)$, generation of the $n$th MH point; parameter $N_n$}
\begin{algorithmic}
\State Draw $(X_k)_{0 \leq k \leq N_n} \sim \U_\sigma(K)$ and $(U_k)_{1 \leq k \leq N_n} \sim \U([0,1])$
\State $Z \gets X_0$
\State $\Pi \gets \pi_n(X_0)$
\For{$1 \leq k \leq N_n$} 
    \If{$U_k \leq \tfrac{|\pi_n(X_k)|}{|\Pi|}$}
    	\State $Z  \gets X_k$
        \State $\Pi  \gets \pi(X_k)$
    \EndIf
\EndFor \\
\Return $Z$
\end{algorithmic}
\end{algorithm}
The implementation of RM points is straightforward. Note that, all computed (RM, MH, pseudo-Leja, etc) points require to compare the values taken by $|\pi_n|$ at different points. Doing so is unstable since the product may rapidly take very small values as $n$ increases. Hence in all examples below we in fact rather compare logarithms by means of the sum $\log(|\pi_n|(z)) =\textstyle \sum_{j=0}^{n-1} |z-z_j|$. This is good practice as is well known in the area.

\paragraph{Number of chosen points.} Recall that both MH points or RM points rely on a number of points $N_n$. Our results make it precise how to choose $N_n \sim n^\alpha$ with an appropriately chosen~$\alpha$, in the form $\alpha > r_\ell$ for MH points and $\alpha > r_m r_c$ for RM points. We will throughout take $N_n =\lfloor{n^{\alpha}} \rfloor$ with $\alpha = r_\ell + \e$ (and $\alpha = r_m r_c +\e$, respectively), with the value of $\e$ set to $\e = 0.01$. It is worth stressing that the parameter $\e$ plays an important role. Indeed, by taking larger values of $\e>0$, we would typically obtain "better" points, but at the expense of increased computational cost. 
From now on, any reference to MH or RM points hence refers to the points with our specific choice of numbers $N_n$.

When it comes to pseudo-Leja points, various constructions are introduced in~\cite{PseudoLeja2012}. We will mostly stick to the arguably most general one, given by Proposition 2 there. It essentially applies to nonpolar compact sets with $C^1$ boundary, and rests on a Markov inequality~\eqref{markov}. Assuming a parameterisation of the boundary is known, each point is chosen among a family of $N_n$ points with $N_n \sim n^{r_m}$, and yields pseudo-Leja points of order $0$. For less smooth compact sets, such as polygons, alternatives are proposed in~\cite{PseudoLeja2012} with $N_n \sim n$, yielding pseudo-Leja points that are close to being of order $0$, since with the notations of Definition~\ref{pseudoLeja}, they are pseudo-Leja points with $\tau_n \sim \tfrac{1}{\ln(n)}$. We could also define random points to be sampled directly on the boundary to reach a comparable number of points; we do not pursue this here.

\paragraph{Comparison of methods.}
We provide below a table that summarises the pros and cons of different methods.  
\begin{table}[h!]
\centering
\begin{tabular}{ | c ||c|c|c|}
 \hline \hline
  & MH points  &RM points& \makecell{pseudo-Leja points \\(\cite{PseudoLeja2012}, Proposition 2)}\\
   \hline   \hline
modularity   &  \ding{51} &   \ding{51}  &    \ding{55} \\
   \hline
reproducibility &   \ding{55} & \ding{55} & \ding{51} \\
   \hline
\makecell{order as pseudo-Leja points \\ (accuracy)} &  $\sim 1+r_\ell$  &   0    &    0 \\
   \hline
\makecell{number of underlying points \\(complexity)} &  $r_\ell$  &   $r_m r_c$    &    $r_m$ \\

 \hline
\end{tabular}
\caption{Comparison between different methods.}
\label{comparison}
\end{table}

By modularity, we mean that the random versions we propose are more easily implemented with very little knowledge about the compact $K$ as compared to pseudo-Leja points, since the only underlying assumption is to be able to draw points uniformly at random.

One price to pay with random approaches compared to deterministic ones (such as pseudo-Leja points) is the lack of reproducibility. As will be seen with the upcoming examples, different samples may lead to significantly different Lebesgue constants. Hence a good practice with random approaches would be to sample few examples and choose the best in terms of Lebesgue constant, but finely estimating such constants can be computationally expensive. 

In terms of accuracy, which can informally be measured by the order of points that are known to be pseudo-Leja points, deterministic pseudo-Leja points of~\cite{PseudoLeja2012}[Proposition 2] and RM points are of order $0$, while MH points are order $1+r_\ell+\e$ for all $\e>0$. Theoretically, it is not known how exactly the orders of pseudo-Leja points actually impacts Lebesgue constants, but our numerical simulations seem to show increased Lebesgue constants, as intuition would suggest.  

Each method has a corresponding number of underlying points $N_n$. For many compact sets, we have $r_\ell = r_m r_c$ so that the two random sets of points behaves equivalently in terms of computational burden. For sets with positive area measure and boundary not smoother than Lipschitz, RM  points become intractable, while the two others remain tractable. 

Finally, let us note that only RM points lend themselves to straightforward parallelisation.

\subsection{Interval}
We start with the case of the interval $K = [-1,1] \times \{0\}$ with $\sigma$ the arclength, for which all relevant hypotheses~\eqref{nonpolar}, \eqref{linf}, \eqref{nikolskii}, \eqref{markov}, \eqref{atom} and~\eqref{covering} hold, with $r_\ell = 2$, $r_m=2$ and $r_c = 1$. Hence, theory requires $N_n = n^{2+\e}$ for both MH points and RM points.

\paragraph{Single sample of MH points.} First, we draw one single sample of $500$ MH points, with $N_n = \lfloor{n^{r_\ell+\e}} \rfloor= \lfloor{n^{2+\e}} \rfloor$ underlying points, and we illustrate the result of Theorem~\ref{thm_mh_leja}.
We show on the left panel of Figure~\ref{Single_Draw_Interval} how the measure $\mu_K$ is recovered, which here is known explicitly to have density $x\mapsto \tfrac{1}{\pi} \tfrac{1}{\sqrt{1-x^2}}$ with respect to the arclength.

Then we look at the interpolation process associated to that specific set of points, when applied to the fonction $f :z \mapsto \tfrac{1}{z^2+0.1^2}$. We numerically evaluate the error $e_n(f) := \|L_n(f) -f\|_K$ by computing the maximum on a grid of $10^4$ points. Figure~\ref{Single_Draw_Interval} displays the evolution of $\log(e_n(f))$ as a function of $n$, which shows the expected asymptotic behaviour. That is, we obtain geometric convergence of the error towards $0$ (up until the plateau seen at $n\approx 300$, due to machine precision being reached). In fact, the obtained slope for $n \in \{0, \ldots, 300\}$ by a linear fit corresponds to a geometric convergence of about $\approx 0.914$, close to the theoretically expected $\tfrac{10}{\sqrt{101}+1} \approx 0.905$, see~\cite{PseudoLeja2012} for more details. 

\bigskip 
\begin{figure}[h!]
     \centering
       \hspace*{\fill}%
     \begin{subfigure}[b]{0.35\textwidth}
         \centering
         \includegraphics[width=\textwidth]{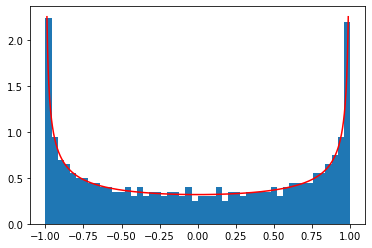}
     \end{subfigure}
     \hfill
     \begin{subfigure}[b]{0.35\textwidth}
         \centering
         \includegraphics[width=\textwidth]{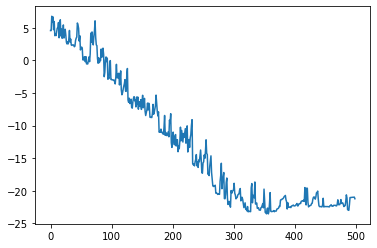}
     \end{subfigure}
            \hspace*{\fill}%
     \hfill
        \caption{\textit{Single draw of $500$ MH points}. On the left, normalised histogram of the 500 points (with a bin size equal to $50$), compared to the density $x\mapsto \tfrac{1}{\pi} \tfrac{1}{\sqrt{1-x^2}}$ the equilibrium measure $\mu_K$ has with respect to the arclength. On the right, evolution of the log-error $\log(\|L_n(f)-f\|_K)$ for $f  : z \mapsto \tfrac{1}{z^2+0.1^2}$, showing geometric convergence of the error until machine precision is reached.} 
    \label{Single_Draw_Interval}
\end{figure}

\paragraph{Computational times.} 
We report the time it takes to sample from either MH points or RM points. We find that the latter points are obtained 4 to 5 times faster. When the number $n$ nears about $1000$, computation times become of the order of the hour, or few hours for MH points. For comparison, it is worth noting that if one were to use weakly admissible meshes with the same number of points, that is, of the order $n^2$, then one would be led to computation times close to those found for RM points.  

These results can be checked to be very robust with respect to the chosen sample. 

\begin{table}[h!]
\centering
\begin{tabular}{ | c ||c|c|c|c|c|c|c|c|}
 \hline
  Number of points $n$ & 100  & 200 & 300 & 400 & 500 & 600 & 700 & 800 \\
 \hline  \hline
 MH points   & 0'14  & 2'01 & 6'50 & 16'39 & 35'42 & 64'51 & 106'19 & 168'21 \\
    \hline
RM points & 0'02  & 0'21 & 1'24 &  5'19 & 9'25 & 15'43 & 25'52 & 40'31    \\ 
    \hline
\end{tabular}
\caption{Time required (in minutes) to compute one sample of $n$ MH or RM points within the interval, with $N_n =  \lfloor{n^{2+\e}} \rfloor$, $\e = 0.01$.}  
\label{times}
\end{table}

\paragraph{Average Lebesgue constants.} 
We then investigate Lebesgue constants for the two proposed random sets of points. In order to do so, we compute $100$ samples of $200$ points, and numerically compute $\Lambda_n$ for $n \in \{0, \ldots, 200\}$. Then, we compute the corresponding statistical averages and standard deviations. 

We emphasise that Lebesgue constants are notoriously hard to properly evaluate; the results presented are obtained by computing the maximum of the corresponding Lebesgue functions $\lambda_n$ on a sufficiently fine grid of the unit disk which, rigorously speaking,  only provides a lower bound for $\Lambda_n$. We used a grid of about $5. 10^4$ points, as we empirically checked that for up to $200$ interpolation points, a finer grid does not lead to significant improvement in approximating~$\Lambda_n$. 
We report the values in Table~\ref{lebesgue_interval}, in the form of estimates for $\mathbb{E}[\Lambda_n]$ and $\sqrt{\mathrm{Var}(\Lambda_n)}$. More precisely, we compute the slope obtained by linear fit of $\log(\mathbb{E}[\Lambda_n])$ (respectively $\log(\sqrt{\mathrm{Var}(\Lambda_n)}$)) against $\log(n)$, for $n \in \{10, \ldots, 200\}$. 
 
In order to explore the difference between MH points and the theoretical random Leja points they originate form, we also approximately compute Lebesgue constants of random Leja points. One sample of random Leja points is obtained as follows: we use rejection sampling by computing an approximate upper bound for $\|\pi_n\|_K$ as follows: we calculate $2 \times \max |\pi_n(w_i)|$ where the $w_i$'s are fixed to be on a grid of $10^4$ points. This upper bound is thus not guaranteed to be an actual upper bound, and will typically fail to be one if $n$ is taken to be too large.

Numerically, we find that all random points almost surely have polynomially growing Lebesgue constants. On average, RM points have the best (average) Lebesgue constants, which grow around~$n^{0.54}$, while MH points have a comparatively worst average Lebesgue constant in $n^{1.56}$. This is comparable to the exponent found for approximate random Leja points, which suggests that the Metropolis-Hastings procedure (with the chosen value of $N_n$) involved in sampling from the distribution $\pi_n$ does not worsen Lebesgue constants.

For the sake of comparison, let us note that (approximate) usual Leja points obtained by maximisation of $|\pi_n|$ on a grid of $10^5$ points, starting from $Z_0$ uniformly chosen at random behave like  $n^{0.71}$ (on average for $100$ draws of $Z_0$), which is comparable to the order obtained for RM points.

We also find that Lebesgue constants consistently have standard deviations that are of the same order as their averages.

\begin{table}[h!]
\centering
\begin{tabular}{ | c ||c|c|c|}
 \hline
  & MH points  &  \makecell{(approximate) \\random Leja points} &RM points\\
 \hline  \hline
 $\mathbb{E}[\Lambda_n]$   &   $1.56$    &    $1.68$ &   $0.54$  \\
    \hline
$\sqrt{\mathrm{Var}(\Lambda_n)}$ &  $1.6$  &    $1.86$ &   $0.50$     \\ 
    \hline
\end{tabular}
\caption{Estimates for polynomial growth of $\mathbb{E}[\Lambda_n]$ and  $\sqrt{\mathrm{Var}(\Lambda_n)}$ in the case of the interval, as obtained by linear fit in $\log$-$\log$ scale, for $n \in \{10, \ldots, 200\}$. Statistical averages are computed over $100$ draws, Lebesgue constants are estimated on a grid of $5. 10^4$ points.  Underlying number of points for MH and RM points are $N_n = \lfloor{n^{2+\e}} \rfloor$ with $\e = 0.01$.} 
\label{lebesgue_interval}
\end{table}

\subsection{Unit disk}
In the case of the unit disk, usual deterministic Leja points are actually well-understood. In fact, they have a closed-form expression~\cite{Calvi_UnitDisk_2011}, and it is known that their Lebesgue constant satisfies $\Lambda_n \leq 2 n$~\cite{Lebesgue_Leja_Disk2013}.

The considered measure $\sigma$ is the area measure; then all hypotheses~\eqref{nonpolar}, \eqref{linf}, \eqref{nikolskii}, \eqref{markov}, \eqref{atom} and~\eqref{covering} hold, with $r_\ell = 2$, $r_m= 1$ and $r_c = 2$. Hence both in the case of MH and RM points, $N_n = \lfloor{n^{2+\e}} \rfloor $ points are required to be drawn. 

\paragraph{Single sample.} First, we show one sample of a set of $200$ points drawn according to the two techniques, see Figure~\ref{Single_Disk}. In the case of MH points, we find more points towards the interior of the disk, while there are fewer for RM points. This is to be expected since the first points are almost surely pseudo-Leja points of order $\sim 1+r_\ell = 3$, while the second points are almost surely pseudo Leja points of order $0$.

\begin{figure}[h]
     \centering
       \hspace*{\fill}%
     \begin{subfigure}[b]{0.34\textwidth}
         \centering
         \includegraphics[width=\textwidth]{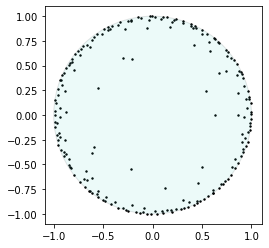}
     \end{subfigure}
     \hfill
     \begin{subfigure}[b]{0.34\textwidth}
         \centering
         \includegraphics[width=\textwidth]{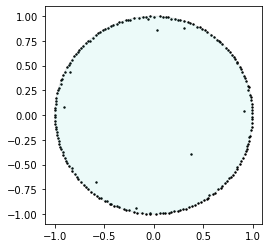}
     \end{subfigure}
       \hspace*{\fill}%
     \hfill
        \caption{Example of $n=200$ MH points (left figure) and RM points (right figure), with $N_n = \lfloor{n^{2+\e}} \rfloor$ in both cases, for $\e = 0.01$.} 
    \label{Single_Disk}
\end{figure}

\paragraph{Average Lebesgue constants.} 
We then investigate average Lebesgue constants, evaluating the corresponding Lebesgue functions on a grid of around $5. 10^4$ points, following exactly the procedure described in the case of the interval.
We uncover that Lebesgue constants are quite significantly worse for MH points than they are for RM points.

As in the case of the interval, we find that average and standard deviation of Lebesgue constants are of the same order.

\bigskip 
\begin{table}[h!]
\centering
\begin{tabular}{ | c ||c|c|}
 \hline
  & MH points  &RM points\\
 \hline \hline
 $\mathbb{E}[\Lambda_n]$   & $2.92$   &   $0.50$  \\ 
    \hline
$\sqrt{\mathrm{Var}(\Lambda_n)}$&   $2.99$ & $0.51$ \\
    \hline
\end{tabular}
\caption{Estimates for polynomial growth of $\mathbb{E}[\Lambda_n]$ and  $\sqrt{\mathrm{Var}(\Lambda_n)}$ in the case of the disk, as obtained by linear fit in $\log$-$\log$ scale, for $n \in \{10, \ldots, 200\}$. Statistical averages are computed over $100$ draws, Lebesgue constants are estimated on a grid of around $5.10^4$ points.  Underlying number of points for MH and RM points are $N_n =\lfloor{n^{2+\e}} \rfloor$ with $\e = 0.01$.} 
\label{lebesgue_disk}
\end{table}

\subsection{Polygons}
We end this subsection by the case of polygons, whose boundary is only Lispchitz. Taking $\sigma$ to be the area measure, all hypotheses~\eqref{nonpolar}, \eqref{linf}, \eqref{nikolskii}, \eqref{markov}, \eqref{atom} and~\eqref{covering} again hold, with $r_\ell = 2$, $r_m=2$ and $r_c = 2$. 

In this case, RM points become computationally intractable when $n$ approaches about $100$, since $r_m r_c =4$. On the other hand, MH points are significantly more tractable; we give an example of one sample of $200$ MH points for two polygons in Figure~\ref{fig_polygons}. 

Finally, the method of Proposition 2 in~\cite{PseudoLeja2012} does not apply here because the boundary is not smooth enough. 
In this case, the best method in terms of number of points $N_n$ is another one proposed in~\cite{PseudoLeja2012}, by using Chebychev points on each edge, leading to pseudo-Leja points with slow discrepancy $\tau_n\sim \tfrac{1}{\log(n)}$. The resulting method requires $N_n = O(n)$ points. 
This improved complexity is made at the loss of modularity: for a complex polygon such as the one used on the right panel of Figure~\ref{fig_polygons}, one would need to parameterise each of the $15$ edges, whereas MH  points merely require to know how to decide if a point is inside the polygon or not, since one can then easily use a rejection method to sample uniformly within the polygon.


Interestingly, points concentrate all around the boundary and more especially at the corners, but only on the smoother part. This would not be the case for the aforementioned deterministic pseudo Leja points, since these would distribute points close to all corners.

\begin{figure}[H]
     \centering
       \hspace*{\fill}%
     \begin{subfigure}[b]{0.34\textwidth}
         \centering
         \includegraphics[width=\textwidth]{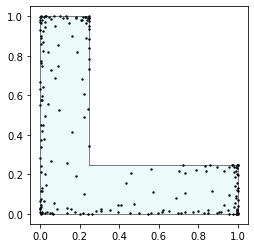}
     \end{subfigure}
     \hfill
     \begin{subfigure}[b]{0.34\textwidth}
         \centering
         \includegraphics[width=\textwidth]{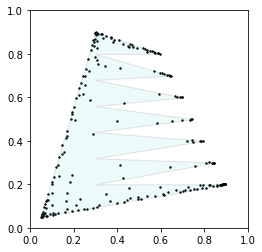}
     \end{subfigure}
       \hspace*{\fill}%
     \hfill
        \caption{Example of $n=200$ MH points for two polygons, with $N_n = \lfloor{n^{2+\e}} \rfloor$, $\e = 0.01$.} 
    \label{fig_polygons} 
\end{figure}

\bibliographystyle{unsrt}
\bibliography{Biblio_Leja.bib}

\begin{thebibliography}{10}

\bibitem{Leja1957}
Franciszek Leja.
\newblock Sur certaines suites li{\'e}es aux ensembles plans et leur
  application {\`a} la repr{\'e}sentation conforme.
\newblock In {\em Annales Polonici Mathematici}, volume~1, pages 8--13, 1957.

\bibitem{PseudoLeja2012}
Leokadia Bia{\l}as-Cie{\.z} and Jean-Paul Calvi.
\newblock Pseudo {L}eja sequences.
\newblock {\em Annali di Matematica Pura ed Applicata}, 191:53--75, 2012.

\bibitem{CalviLeastSquares2008}
Jean-Paul Calvi and Norman Levenberg.
\newblock Uniform approximation by discrete least squares polynomials.
\newblock {\em Journal of Approximation Theory}, 152(1):82--100, 2008.

\bibitem{ReviewWAM2011}
Len Bos, Stefano De~Marchi, Alvise Sommariva, and Marco Vianello.
\newblock Weakly admissible meshes and discrete extremal sets.
\newblock {\em Numerical Mathematics: Theory, methods and applications},
  4(1):1--12, 2011.

\bibitem{RandomisedWAM2023}
Yiming Xu and Akil Narayan.
\newblock Randomized weakly admissible meshes.
\newblock {\em Journal of Approximation Theory}, 285:105835, 2023.

\bibitem{Ransford1995}
Thomas Ransford.
\newblock {\em Potential theory in the complex plane}.
\newblock Number~28. Cambridge university press, 1995.

\bibitem{Gaier1987lectures}
Dieter Gaier.
\newblock {\em Lectures on complex approximation}, volume 188.
\newblock Springer, 1987.

\bibitem{Totik2023}
Vilmos Totik.
\newblock The {L}ebesgue constants for {L}eja points are subexponential.
\newblock {\em Journal of Approximation Theory}, 287:105863, 2023.

\bibitem{Totik2010}
Rodney Taylor and Vilmos Totik.
\newblock Lebesgue constants for {L}eja points.
\newblock {\em IMA Journal of Numerical Analysis}, 30(2):462--486, 2010.

\bibitem{Lebesgue_Leja_Disk2013}
Moulay~Abdellah Chkifa.
\newblock On the {L}ebesgue constant of {L}eja sequences for the complex unit
  disk and of their real projection.
\newblock {\em Journal of Approximation Theory}, 166:176--200, 2013.

\bibitem{Explicit_Lebesgue_Interval2022}
Vladimir Andrievskii and Fedor Nazarov.
\newblock A simple upper bound for {L}ebesgue constants associated with {L}eja
  points on the real line.
\newblock {\em Journal of Approximation Theory}, 275:105699, 2022.

\bibitem{FastLeja1998}
J~Baglama, D~Calvetti, and L~Reichel.
\newblock Fast {L}eja points.
\newblock {\em Electron. Trans. Numer. Anal}, 7(124-140):119--120, 1998.

\bibitem{Bloom1992}
Thomas Bloom, Len Bos, C~Christensen, and Norman Levenberg.
\newblock Polynomial interpolation of holomorphic functions in $\mathbb{C}$ and
  $\mathbb{C}^n$.
\newblock {\em The Rocky Mountain Journal of Mathematics}, 22(2):441--470,
  1992.

\bibitem{NumericalCapacity2008}
W.~Dijkstra and M.E. Hochstenbach.
\newblock {\em Numerical approximation of the logarithmic capacity}.
\newblock CASA-report. Technische Universiteit Eindhoven, 2008.

\bibitem{NumericalCapacity2014}
Susanna Liesipohja.
\newblock {\em Numerical methods for computing logarithmic capacity}.
\newblock PhD thesis, M. Sc. thesis, University of Helsinki, 2014, 2014.

\bibitem{Tsuji1959}
Masatsugu Tsuji.
\newblock {\em Potential theory in modern function theory}.
\newblock Maruzen, 1959.

\bibitem{Kroo2011}
Andr{\'a}s Kro{\'o}.
\newblock On optimal polynomial meshes.
\newblock {\em Journal of Approximation Theory}, 163(9):1107--1124, 2011.

\bibitem{Pritsker1997}
Igor~E Pritsker.
\newblock Comparing norms of polynomials in one and several variables.
\newblock {\em Journal of Mathematical Analysis and Applications},
  216(2):685--695, 1997.

\bibitem{BookPolynomials1994}
Gradimir~V Milovanovic, Themistocles~M Rassias, and DS~Mitrinovic.
\newblock {\em Topics in polynomials: extremal problems, inequalities, zeros}.
\newblock World Scientific, 1994.

\bibitem{ReviewPolynomials1994}
Gradimir~V Milovanovic, Themistocles~M Rassias, and DS~Mitrinovic.
\newblock {\em Topics in polynomials: extremal problems, inequalities, zeros}.
\newblock World Scientific, 1994.

\bibitem{BernsteinMarkovReview2021}
Sergei Kalmykov, B{\'e}la Nagy, and Vilmos Totik.
\newblock Bernstein-and {M}arkov-type inequalities.
\newblock {\em arXiv preprint arXiv:2104.02348}, 2021.

\bibitem{MarkovPommerenke1959}
Ch~Pommerenke.
\newblock On the derivative of a polynomial.
\newblock {\em Michigan Math. J.}, 6(1):373--375, 1959.

\bibitem{BookMCMC1999}
Christian~P Robert, George Casella, and George Casella.
\newblock {\em Monte Carlo statistical methods}, volume~2.
\newblock Springer, 1999.

\bibitem{IMH2024}
Austin Brown and Galin~L Jones.
\newblock Exact convergence analysis for {M}etropolis--{H}astings independence
  samplers in {W}asserstein distances.
\newblock {\em Journal of Applied Probability}, 61(1):33--54, 2024.

\bibitem{Tierney1994}
Luke Tierney.
\newblock Markov chains for exploring posterior distributions.
\newblock {\em The Annals of Statistics}, pages 1701--1728, 1994.

\bibitem{Calvi_UnitDisk_2011}
Jean-Paul Calvi and Manh~Phung Van.
\newblock On the {L}ebesgue constant of {L}eja sequences for the unit disk and
  its applications to multivariate interpolation.
\newblock {\em Journal of Approximation Theory}, 163(5):608--622, 2011.

\end{thebibliography}

\end{document}